\documentclass[12pt,reqno]{amsart}
\usepackage{amsmath, amssymb, amsthm}
\usepackage{mathtools}
\usepackage{enumitem}
\usepackage{url}
\usepackage[breaklinks]{hyperref}

\newcommand{\Sel}{\operatorname{Sel}}

\renewcommand{\Im}{\operatorname{Im}}

\newcommand{\Gal}{\operatorname{Gal}}

\renewcommand{\Im}{\operatorname{Im}}
\newcommand{\pr}{\operatorname{pr}}
\newcommand{\s}{{\sigma}}

\newcommand{\g}{\gamma}
\newcommand{\G}{\Gamma}
\newcommand{\ZZ}{\mathbb{Z}}
\newcommand{\SL}{\operatorname{SL}}
\newcommand{\x}{\xi}
\newcommand{\z}{\zeta}

\newcommand{\HCal}{{\mathcal{H}}}

\let\amstextb\b  \let\amstextd\d  \let\amstextk\k  \let\amstextl\l
\let\amstextL\L  \let\amstexto\o  \let\amstextO\O  \let\amstextr\r
\let\amstextt\t  \let\amstexti\i  \let\amstexta\a
\renewcommand{\a}{\ifmmode\alpha\else\expandafter\amstexta\fi}
\renewcommand{\b}{\ifmmode\beta\else\expandafter\amstextb\fi}
\renewcommand{\d}{\ifmmode\delta\else\expandafter\amstextd\fi}
\renewcommand{\k}{\ifmmode\kappa\else\expandafter\amstextk\fi}
\renewcommand{\l}{\ifmmode\lambda\else\expandafter\amstextl\fi}
\renewcommand{\L}{\ifmmode\Lambda\else\expandafter\amstextL\fi}
\renewcommand{\o}{\ifmmode\omega\else\expandafter\amstexto\fi}
\renewcommand{\O}{\ifmmode\Omega\else\expandafter\amstextO\fi}
\renewcommand{\r}{\ifmmode\rho\else\expandafter\amstextr\fi}
\renewcommand{\t}{\ifmmode\tau\else\expandafter\amstextt\fi}
\renewcommand{\i}{\ifmmode\infty\else\expandafter\amstexti\fi}
\numberwithin{equation}{section}
\theoremstyle{plain}
\newtheorem{theorem}{Theorem}[section]
\newtheorem{lemma}[theorem]{Lemma}
\newtheorem{remark}[theorem]{Remark}
\newtheorem{definition}[theorem]{Definition}

\newtheorem{corollary}[theorem]{Corollary}

\newtheorem{example}[theorem]{Example}
\newcommand{\Ob}{\mathcal{O}}
\newcommand{\Diff}{\mathfrak{d}}   
\newcommand{\nf}{\mathfrak{n}}
\newcommand{\fa}{\mathfrak{a}}
\newcommand{\fb}{\mathfrak{b}}
\newcommand{\pf}{\mathfrak{p}}
\newcommand{\qf}{\mathfrak{q}}
\newcommand{\mf}{\mathfrak{m}}
\newcommand{\Tr}{\operatorname{Tr}}
\newcommand{\Norm}{\mathrm{N}}
\newcommand{\ub}[1]{u(#1)}
\newcommand{\Gz}[1]{\Gamma_0(#1)}
\newcommand{\Mk}[2]{M_k\!\left(\Gamma_0(#1),\,#2\right)}
\newcommand{\Wn}{W_{\nf}}
\newcommand{\Vn}{V_{\nf}}
\newcommand{\kn}{k_{\nf}}

\newcommand{\ord}{\operatorname{ord}}
\newcommand{\il}{\operatorname{il}}
\newcommand{\bA}{{\mathbf A}}
\newcommand{\ba}{{\mathbf a}}
\newcommand{\bff}{{\mathbf f}}
\newcommand{\lf}{\mathfrak{l}}
\newcommand{\hstar}[2]{h\bigl(#1^{\ast},\,#2\bigr)}
\providecommand{\Cl}{\operatorname{Cl}}
\providecommand{\rank}{\operatorname{rank}}
\providecommand{\tors}{\mathrm{tors}}
\providecommand{\QQ}{\mathbb{Q}}
\DeclareFontEncoding{OT2}{}{}
\DeclareFontFamily{OT2}{wncyr}{}
\DeclareFontShape{OT2}{wncyr}{m}{n}{<->wncyr10}{}
\DeclareFontSubstitution{OT2}{wncyr}{m}{n}
\providecommand{\Sha}{\mbox{\usefont{OT2}{wncyr}{m}{n}Sh}}
\theoremstyle{remark}
\newtheorem{remarknum}[theorem]{Remark}
\theoremstyle{plain}

\title{Selmer groups of CM-twists of elliptic curves}

\author{Olivia Beckwith}
\author{T\`{u}ng Ho\`{a}ng}

\begin{document}
\begin{abstract}
		Let $F$ be a totally real Galois number field of degree $g\leq5$ and let $\ell$
		be an odd prime. Let $E/F$ be an elliptic curve with an $F$-rational point of
		order $\ell$. Under explicit arithmetic and local hypotheses, together with
		an explicit non-vanishing condition modulo $\ell$, we prove that there are
		$
		\gg_{F,E,\ell}\frac{X^{1/(2g)}}{\log X}
		$
		totally negative square classes $d\in F^\times/(F^\times)^2$ with
		$
		\left|\mathrm{N}_{F/\QQ}\bigl(D(F(\sqrt{d})/F)\bigr)\right|<X
		$
		for which $\Sel_\ell(E^d,F)$ is trivial. Such twists have rank zero and trivial $\ell$-part of the Shafarevich--Tate group. The condition is verifiable by a finite computation, which we carry out in an example. We lift the method of James and Ono from $\QQ$ to the totally real setting, combining a theorem of Morrow, which relates the Selmer group of such a twist to the class group of the CM extension $F(\sqrt{d})$, with an indivisibility theorem of Takai for relative class numbers. Takai twists only by quadratic Hecke characters, whereas Morrow's conditions are local; we extend the twisting argument to primitive quadratic residue-class
		characters to make
		the two results compatible.
	\end{abstract}

	\maketitle
%%%%%%%% 
	\section{Introduction}
	Let $E$ be an elliptic curve over a number field $F$ given by the Weierstrass equation
	\begin{equation*}
		E: y^2 = x^3 + Ax + B .
	\end{equation*}
	For $d \in F^{\times}/(F^{\times})^{2}$, we let $E^d$ denote the elliptic
	curve with Weierstrass equation
	\begin{equation*}
		E^d: y^2 = x^3 + A d^2 x + Bd^3 .
	\end{equation*}
	All of the $E^d$ are isomorphic to $E$ over the quadratic extension
	$F(\sqrt{d})$, but their arithmetic over the base field $F$ can vary with
	$d$. For $F=\QQ$, Goldfeld's conjecture \cite{Goldfeld}
	predicts that the average Mordell--Weil rank in the quadratic twist
	family of $E$ is $1/2$. More precisely,
	\begin{equation*}
		\sum_{\substack{|D|<X\\  D\text{ square-free}}}
		\rank E^D(\QQ)
		\;\sim\;
		\frac{1}{2}
		\sum_{\substack{|D|<X\\  D\text{ square-free}}}1,
		\qquad X\to\infty.
	\end{equation*}
There has been a great deal of progress on this conjecture, and the work of
Smith \cite{Smith} on the distribution of $\ell^{\infty}$-Selmer groups in
degree $\ell$ twist families represents substantial progress in this direction.
	
	On the other hand, the behavior of quadratic twists over general number
	fields can be subtler than over $\QQ$. In particular, Tim and Vladimir
	Dokchitser \cite{DD} exhibited elliptic curves over number fields for which
	every quadratic twist is expected to have positive rank. This shows that the
	classical $50\%$ rank $0$/$50\%$ rank $1$ distribution predicted by
	Goldfeld's conjecture over $\QQ$ does not extend directly to arbitrary
	number fields. 
	
	Instead of Mordell--Weil rank, one may consider the $\ell$-Selmer rank. In this paper, we let $\ell$ be an odd prime
	and let $E$ be an elliptic curve over $F$. The Kummer sequence gives
	\begin{equation*}
		1\longrightarrow E(F)/\ell E(F)
		\longrightarrow \Sel_{\ell}(E,F)
		\longrightarrow \Sha(E/F)[\ell]
		\longrightarrow 1 .
	\end{equation*}
	In particular, if $\Sel_{\ell}(E,F)$ is trivial then $E(F)$ has rank $0$, it has
	no point of order $\ell$, and $\Sha(E/F)[\ell]$ is trivial as well. Counting the
	twists $E^{d}$ with trivial $\ell$-Selmer group is therefore a way of producing
	twists of rank $0$ together with information about the Shafarevich--Tate
	group. The bounds obtained in this way are of the order $X^{1/(2g)}/\log X$,
	so they exhibit a set of twists of density $0$ and do not bear directly on the
	proportions predicted by Goldfeld's conjecture.
	
	Over $\QQ$ this was studied by James and Ono \cite{JamesOno}. They proved
	that for a modular elliptic curve $E/\QQ$ and every sufficiently large prime
	$\ell$,
	\begin{equation*}
		\#\left\{\,|D|<X\ :\ D \text{ square-free},\ \Sel_{\ell}(E^{D},\QQ)=1\,\right\}
		\;\gg_{E,\ell}\;
		\frac{\sqrt{X}}{\log X},
	\end{equation*}
	and they proved a companion statement covering the primes $\ell\in\{3,5,7\}$ for
	which $E$ may have a rational point of order $\ell$. In that companion statement, the twisting parameter is taken to be negative,
	so that the associated quadratic field is imaginary. The proof then uses a
	theorem of Frey \cite{Frey} relating the Selmer group of the twist to the
	class group of this imaginary quadratic field.
	
	The purpose of this paper is to prove an analogue of the second statement over a
	totally real field. We state and prove our main result as
	Theorem~\ref{thm:lifting} in \S\ref{sec:main}. Since it involves several
	technical assumptions, we give here a simplified version for real quadratic
	base fields.
	\begin{theorem}\label{thm:quadratic-version}
		Let $F/\QQ$ be a real quadratic field in which $2$ either ramifies or splits, and let
		$\ell\in\{7,11,13\}$ be such that $\ell\nmid h(F)$. Let $E/F$ be an
		elliptic curve having an $F$-rational point $P$ of order $\ell$, and
		let $\lf$ be a prime of $F$ above $\ell$ such that $P$ is not contained in
		the kernel of reduction modulo $\lf$ in the sense of \cite{Morrow}; see
		\S\ref{ss:ec-background}.
		
		Suppose that there exists a primitive Hecke character $\chi_H$ of $F$
		of order $\ell$ such that
		\begin{equation*}
			\left\{
			\pf\mid N(E):
			\chi_H(\pf)\neq0,\ 
			\ord_{\pf}(\Delta_E)\not\equiv0\pmod{\ell}
			\right\}
			=
			\emptyset.
		\end{equation*}
		
		Suppose moreover that there exists a rational prime $q>C_{F,E,\ell}$, inert in $F$,
		where $C_{F,E,\ell}=(A/g)^{g}$ is the constant of Theorem~\ref{thm:lifting},
		and put
		\begin{equation*}
			K_q=F(\sqrt{-q}).
		\end{equation*}
		Assume that $K_q/F$ satisfies the following local conditions:
		\begin{enumerate}[label=\textup{(\arabic*)},leftmargin=*]
			\item 
			Every prime $\qf\mid2$ dividing $N(E)$ ramifies in $K_q/F$.
			
			\item  If $\ord_{\lf}(j_E)<0$, then $\lf$ is inert in $K_q/F$.
			
			\item For every prime $\pf\mid N(E)$ with $\pf\nmid2\ell$,
			\begin{equation*}
				\begin{cases}
					\pf \text{ is inert in } K_q/F,
					& \text{if } \ord_{\pf}(j_E)\geq0,\\[4pt]
					\pf \text{ is inert in } K_q/F,
					& \text{if } \ord_{\pf}(j_E)<0
					\text{ and } E/F_{\pf}\text{ is a Tate curve},\\[4pt]
					\pf \text{ splits in } K_q/F,
					& \text{otherwise}.
				\end{cases}
			\end{equation*}
		\end{enumerate}
		
		Assume in addition that
		\begin{equation*}
			\ell\nmid h^{-}(K_q/F).
		\end{equation*}
		Then
		\begin{equation*}
			\#\left\{
			d\in F^\times/(F^\times)^2:
			\begin{array}{l}
				d\text{ is totally negative},\\[2pt]
				\left|
				\Norm_{F/\QQ}\!\bigl(D(F(\sqrt d)/F)\bigr)
				\right|<X,\\[2pt]
				\Sel_\ell(E^d,F)=1
			\end{array}
			\right\}
			\gg_{F,E,\ell}
			\frac{X^{1/4}}{\log X}.
		\end{equation*}
	\end{theorem}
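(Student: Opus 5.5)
This theorem will be deduced as the special case $g=2$ of Theorem~\ref{thm:lifting}, and I would prove it by following the strategy described in the abstract. The argument rests on two ingredients. The first is Morrow's theorem, which bounds $\Sel_\ell(E^d,F)$ by the $\ell$-part of the class group of $K_d=F(\sqrt d)$. The second is Takai's indivisibility theorem, which produces many CM extensions $K_d/F$ with $\ell\nmid h^-(K_d/F)$.

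\textbf{Step 1: Morrow's criterion.} Fix a totally negative $d$ such that $K_d/F$ satisfies local conditions (1)--(3), the same conditions imposed on $K_q$. Morrow's theorem for a curve with a rational $\ell$-torsion point $P$ that is not in the kernel of reduction at $\lf$ then gives an injection of $\Sel_\ell(E^d,F)$ into $\Cl(K_d)[\ell]$. Since $\ell$ is odd, the minus part carries it. I will also need the hypothesis on $\chi_H$: there is no order-$\ell$ Hecke character whose conductor is supported at the bad primes with $\ord_\pf(\Delta_E)\not\equiv0\pmod\ell$. This hypothesis, together with $\ell\nmid h(F)$, rules out the "plus-part" contribution, that is, the $\ell$-torsion in the class group or ray class group of $F$. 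Consequently $\ell\nmid h^-(K_d/F)$ implies $\Sel_\ell(E^d,F)=1$.

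\textbf{Step 2: producing twists.} I encode local conditions (1)--(3) as a congruence condition. Let $\mf$ be the product of the primes above $2$ dividing $N(E)$, the prime $\lf$, and the bad primes. The splitting, inertness, or ramification of each such prime in $K_d/F$ is determined by the primitive quadratic character $\chi_d$ attached to $d$ through its restriction to residue classes modulo $\mf$. The hypothesis that $K_q$ satisfies the conditions shows that the set of admissible characters is nonempty.

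Takai's theorem produces many quadratic twists $\chi$ of a fixed character with $\ell\nmid h^-$. The count is $\gg X^{1/(2g)}/\log X$, which equals $X^{1/4}/\log X$ here. His argument starts from a base character, here $\chi_{-q}$, with $\ell\nmid h^-(K_q/F)$ and $q>C_{F,E,\ell}$. It works through the modulo-$\ell$ non-vanishing of twisted $L$-values and half-integral weight Hilbert modular forms (Shintani/Cohen–Eisenstein series). It then uses a sieve, Sturm bounds, and a Serre-type argument to show that the relevant coefficients are nonzero modulo $\ell$ along a positive-density set of primes.

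\textbf{Step 3: the main obstacle.} Takai twists only by quadratic Hecke characters and does not control local behaviour at the primes dividing $\mf$. I would therefore redo his twisting step using primitive quadratic residue-class characters modulo $\mf\cdot(\text{new prime})$. The twisting operator applied to the Hilbert modular form must then be shown to preserve the non-vanishing modulo $\ell$. It must also fix the residue class modulo $\mf$, so that every resulting $d$ inherits the splitting behaviour of $-q$ at each prime dividing $\mf$.

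Concretely, I would restrict the sieve to primes $\pf$ lying in a fixed ray class modulo $4\mf\ell$. The Chebotarev theorem then leaves a positive proportion of such primes, and this costs only a constant factor in the count. The bound $q>C_{F,E,\ell}$ is what guarantees that the Sturm-bound step applies.

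Distinct primes give distinct square classes $d=-q\varpi$, with $\varpi$ chosen totally positive using the narrow class group. Their discriminants are bounded polynomially in $\Norm\pf$. Counting these primes yields the stated lower bound $\gg_{F,E,\ell}X^{1/4}/\log X$.
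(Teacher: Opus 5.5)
You are right that the work lies in making Takai's twisting compatible with Morrow's local conditions, but the proposal has two genuine gaps. First, you say the statement is the case $g=2$ of Theorem~\ref{thm:lifting} but never carry out that deduction. In particular you never use that $q$ is inert in $F$, and that is exactly what makes the single hypothesis $\ell\nmid h^{-}(K_q/F)$ suffice. The input to Theorem~\ref{thm:lifting} (and to Takai) is not ``a base character $\chi_{-q}$''. It is the congruence \eqref{eq:takai-congruence}, a weighted sum of relative class numbers over all $\xi\in2^{-1}\Ob_{+}$ with $\Tr(\xi)=qg/2$, $(q\Ob,2\xi\Ob)\neq1$ and $\chi_i(2\xi)=\varepsilon_i$. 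To reduce it to $\ell\nmid h^{-}(K_q/F)$ you need three steps.
\begin{enumerate}
\item Take $\varepsilon_i=\chi_i(q)$ for the unrestricted signs, including $\varepsilon_0$ when $\ord_{\lf}(j_E)\geq0$. Check that (1)--(3) force $\chi_i(q)=\varepsilon_i$ for the prescribed ones: at odd primes via $\bigl(\frac{-q}{\pf_i}\bigr)=\bigl(\frac{-1}{\pf_i}\bigr)\chi_i(q)$, and at dyadic primes via the last assertion of Lemma~\ref{lem:dyadic} with $a_0=q$. Then $\xi=q/2$ occurs in the sum.
\item Show it is the only term: since $q\Ob$ is prime, $2\xi=q\mu$ with $\mu\in\Ob_{+}$ and $\Tr(\mu)=g$, and AM--GM forces $\mu=1$.
\item Note that $\beta(q/2)=1$ and that $\ell\nmid Q_{K_q}w_{K_q}$ because $\ell>2g+1$ (Remark~\ref{rem:collapse}).
\end{enumerate}
You also omit the hypothesis checks:
\begin{itemize}
\item $S(2)\setminus\{2,3\}=\{5,7,11,13\}$ and $\ell>5$;
\item $\zeta_\ell\notin F$;
\item the odd part of $[F(\zeta_\ell):F]$ is $3,5,3\geq2$;
\item ``$2$ ramifies or splits'' gives $\Norm(\qf)=2$, as Theorem~\ref{thm:morrow} requires;
\item the dyadic conductors divide $\qf^{2e+1}$, so $C_{F,E,\ell}$ does not depend on $q$.
\end{itemize}

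Second, the concrete mechanism in your Step~3 does not impose the local conditions. Theorem~\ref{thm:takai1} says nothing about the fields it outputs beyond $\chi_i(2\xi)=\varepsilon_i$ and the discriminant bound. It does not produce discriminants of the shape $-q\varpi$; such a family would have discriminants linear in $\Norm\varpi$ and give a count of order $X/\log X$, not $X^{1/4}/\log X$. So choosing auxiliary primes in a ray class by Chebotarev gives no control over the residues of $2\xi$ at the $\pf_i$. The conditions must be built into the form itself, by twisting Shimura's Eisenstein series so that only $\xi$ with $\chi_i(2\xi)=\varepsilon_i$ survive.

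What then has to be proved is not that twisting ``preserves non-vanishing mod $\ell$''; that is the hypothesis. It is that $f\otimes\chi\in\Mk{\mf\nf^2}{\psi}$ when $\chi$ is merely a primitive quadratic character of $(\Ob/\nf)^{\times}$, possibly not coming from a Hecke character (Lemma~\ref{lem:12}, Corollary~\ref{cor:quadratic-twist}). The crux is the identity $h(\gamma,z)=h(\gamma_r,z+\beta_{t_r})$ for the half-integral-weight factor (Lemma~\ref{lem:phase}). It is proved through Shimura's metaplectic factor and the local integrals $J_v$, with Lemma~\ref{lem:localsquare} at $v\mid\nf$. None of this appears in your sketch.

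There are also smaller inaccuracies:
\begin{itemize}
\item Theorem~\ref{thm:morrow} gives a double divisibility, not an injection.
\item The $\chi_H$ condition is an existence hypothesis of Morrow's theorem. It is not what removes a ``plus part''; that is done by $h(K)=h(F)h^{-}(K/F)$ together with $\ell\nmid h(F)$.
\item Morrow also needs $d$ coprime to $\lf N(E)$, which comes from $\varepsilon_i\neq0$.
\end{itemize}
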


		\begin{example}\label{ex:intro}
			Let $F=\QQ(\sqrt2)$ and $\ell=7$, and consider the elliptic curve
			\begin{equation*}
				E:\ y^2+xy+\sqrt2\,y
				=
				x^3+\sqrt2\,x^2-(14+12\sqrt2)x+24+18\sqrt2,
			\end{equation*}
			a global minimal model with
			$E(F)_{\mathrm{tors}}\cong\ZZ/7\ZZ$ and conductor of norm $142$.
			Then
			\begin{equation*}
				\#\left\{
				d\in F^\times/(F^\times)^2:
				\begin{array}{l}
					d\text{ is totally negative},\\[2pt]
					\left|
					\Norm_{F/\QQ}\!\bigl(D(F(\sqrt d)/F)\bigr)
					\right|<X,\\[2pt]
					\Sel_7(E^d,F)=1
				\end{array}
				\right\}
				\gg_{F,E,7}
				\frac{X^{1/4}}{\log X}.
			\end{equation*}
			The verification of the hypotheses of
			Theorem~\ref{thm:quadratic-version} for this curve is given in
			\S\ref{sec:example}.
		\end{example}
	
	The proof combines three ingredients.
	
	The first is the work of Frey and Morrow, which relates the Selmer group of a
	quadratic twist to the class group of the corresponding quadratic extension. In
	\cite{Frey} Frey proved, for elliptic curves over $\QQ$ with a rational point of
	odd prime order $\ell$, a double divisibility relation between the $\ell$-torsion
	of the class group of an imaginary quadratic field and the order of the
	$\ell$-Selmer group of the associated twist. Morrow \cite{Morrow} extended this
	to base fields of degree at most $5$. The consequence we use is recalled in
	\S\ref{ss:frey-morrow}: under a list of local conditions at the primes dividing
	the conductor, the group $\Sel_{\ell}(E^{d},F)$ is trivial if and only if
	$\Cl(F(\sqrt{d}))[\ell]$ is trivial.
	
	The second ingredient is an argument for counting
	CM extensions $K/F$ with class number prime to $\ell$ and with prescribed
	local behaviour at finitely many primes. Over $\QQ$, the idea goes back to an older argument of Kohnen and Ono \cite{KohnenOno}, without
	prescribing a fixed pattern of local conditions, using the theory of half-integral weight modular forms. 
	For counting quadratic twists with trivial Selmer groups, James and Ono \cite{JamesOno}
	modified this argument to include local conditions. Prescribed local conditions in class number indivisibility problems over $\mathbb{Q}$ were subsequently studied by Wiles \cite{Beckwith} and quantitatively by Beckwith \cite{Beckwith}; see also \cite{BRR} for recent results with arbitrary splitting conditions. Over a totally real field the
	corresponding statement is due to Takai \cite{Takai}, who proved an
	indivisibility theorem for the relative class numbers $h^{-}(K/F)$ of totally
	imaginary quadratic extensions $K/F$. His proof uses Shimura's Hilbert
	modular Eisenstein series of half-integral weight \cite{Sh},\cite{Sh2}, whose
	Fourier coefficients are, up to elementary factors, the numbers
	$h^{-}(F(\sqrt{-2\xi})/F)$. By taking suitable linear combinations of quadratic twists of this series
	and iterating over the prescribed characters, Takai restricts its Fourier
	expansion to those $\xi$ satisfying prescribed conditions
	$\chi_i(2\xi)=\varepsilon_i$. These character conditions encode the desired splitting, inertness, or
	ramification behaviour at the corresponding primes. A key input in Takai's
	theorem is the non-vanishing congruence
	\eqref{eq:takai-congruence}, which must be verified in each application.
	In certain cases the sum in this congruence reduces to a single
	class-number term, making the verification particularly simple.
	
	The third ingredient, which contains the main new technical input of this
	paper, is to adapt Takai's sieve to the local conditions required by
	Morrow's theorem. Two difficulties arise. First, these local conditions
	are naturally encoded by characters of $(\Ob/\nf)^\times$, whereas
	Takai's Theorem~1 is stated for quadratic Hecke characters. We prove that
	the twisting argument extends to primitive quadratic characters of
	$(\Ob/\nf)^\times$. The key point is Lemma~\ref{lem:phase}, which controls
	the half-integral-weight factor of automorphy under the translations
	occurring in the twist. This yields Theorem~\ref{thm:takai2}, a version of
	Takai's theorem for primitive quadratic residue class characters. The second difficulty is dyadic. At primes above $2$, the required
	ramification conditions cannot be imposed by the quadratic residue
	characters used at odd primes. Lemma~\ref{lem:dyadic} constructs, for
	each relevant prime $\qf\mid2$, a primitive quadratic character of
	$(\Ob/\qf^{\,n})^\times$ together with a sign whose prescribed value
	forces $\qf$ to ramify in $F(\sqrt{-2\xi})/F$. This allows the dyadic
	conditions in Morrow's theorem to be incorporated into Takai's sieve.
	Together, these two extensions make it possible to impose all of the
	local conditions needed for the Selmer argument within Takai's framework.
	
	The remainder of the paper is organized as follows. Section~\ref{sec:twisting}
	develops the extension of Takai's argument, including the dyadic case.
	Section~\ref{sec:selmer} recalls the results of Frey and Morrow.
	Section~\ref{sec:main} proves the main theorem, and
	Section~\ref{sec:example} verifies its hypotheses for the curve in
	Example~\ref{ex:intro}.
	
	%%%%%%%%%checked
	\section{Hilbert modular forms}\label{sec:twisting}
	\subsection{Background}\label{ss:background}
	Throughout this section $F$ is a totally real number field of degree
	$g=[F:\mathbb{Q}]$, with maximal order $\Ob=\Ob_F$, different $\Diff=\Diff_F$, and
	discriminant $D(F)$. We write $\ba$ and $\bff$ for the sets of archimedean and
	non-archimedean places of $F$, view each $v\in\ba$ as an embedding
	$F\hookrightarrow\mathbb{R}$, and let $F_v$ be the completion at $v$; $\bA_F=\bA$ denotes
	the adele ring, with archimedean and finite parts $F_{\ba}$ and $F_{\bff}$. Put
	$G_F=\SL_2(F)$, $G_v=\SL_2(F_v)$, $G_{\bA}=\SL_2(\bA)$, $G_\ba=\prod_{v\in \ba}G_v=\SL_2(\mathbb{R})^{\ba},$ and for
	$\a=\begin{psmallmatrix}a&b\\c&d\end{psmallmatrix}$ write $a=a_\a,\dots,d=d_\a$. Let $\HCal=\{z\in\mathbb{C}:\Im z>0\}$. The groups $G_F$ and $G_{\ba}$
	act on $\HCal^{\ba}$ componentwise: for
	$z=(z_v)_{v\in\ba}\in\HCal^{\ba}$ and
	$\a=\begin{psmallmatrix}a&b\\c&d\end{psmallmatrix}$,
	\begin{equation*}
		\a(z)=
		\left(
		\frac{a_vz_v+b_v}{c_vz_v+d_v}
		\right)_{v\in\ba},
	\end{equation*}
	where $x_v=v(x)$ when $\a\in G_F$, and the same notation denotes the
	$v$-component when $\a\in G_{\ba}$.	We set
	\begin{equation*}
		j(\a,z)=\prod_{v\in\ba}(c_{\a,v}z_v+d_{\a,v}),
	\end{equation*}
	where $x_{v}=v(x)$. For $x\in F$ write $e(x)=\exp\!\big(2\pi i\,\Tr(x)\big)$; this additive character of $F$
	is trivial on $\Diff^{-1}$.
	More generally, for $w=(w_v)_{v\in\ba}\in\mathbb{C}^{\ba}$ we put
	$e(w)=\exp\!\big(2\pi i\sum_{v\in\ba}w_v\big)$, so that
	$e(\xi z)=\exp\!\big(2\pi i\sum_{v\in\ba}\xi_v z_v\big)$ for $\xi\in F$,
	$z\in\HCal^{\ba}$.
	
	For fractional ideals $\fb,\mf$ of $F$ set 
	$D[\fb,\mf]=SO_2(\mathbb{R})^{\ba}\prod_{v\in\bff}D_v[\fb,\mf]$, where
	\begin{equation*}
		D_v[\fb,\mf]=\Big\{\begin{psmallmatrix}a&b\\c&d\end{psmallmatrix}\in\SL_2(F_v):
		a,d\in\Ob_v,\ b\in(2\Diff^{-1}\fb)_v,\ c\in(2^{-1}\mf\fb^{-1}\Diff)_v\Big\},
	\end{equation*}
	and put $\Gamma[\fb,\mf]=G_\ba D[\fb,\mf]\cap G_F$, $\Gz{\mf}=\Gamma[\Ob,\mf]$.
	Explicitly, for an integral ideal $\mf$ with $4\Ob\mid\mf$, 
	\begin{equation*}
		\Gz{\mf}=\Big\{\begin{psmallmatrix}a&b\\c&d\end{psmallmatrix}\in\SL_2(F):
		a,d\in\Ob,\ b\in 2\Diff^{-1},\ c\in 2^{-1}\mf\Diff\Big\}.
	\end{equation*}
	
	Let $M_{\bA}$ be the metaplectic group of Weil \cite{Weil} attached to $G_{\bA}$, with
	projection $\pr:M_{\bA}\to G_{\bA}$, and let
	\begin{equation*}
	\mathsf r:G_F\to M_{\bA}
	\end{equation*}
	be as in \cite[(3.1a)]{Sh}. We identify $G_F$ with its image under $\mathsf r$ and write $\g^\ast=\mathsf r(\g)$.
	
	To pass to half-integral weight we use the theta series
	\begin{equation*}
		\theta(z)=\sum_{\xi\in\Ob}e(\xi^2 z/2),\qquad z\in\HCal^{\ba},
	\end{equation*}
	and the factor of automorphy $h(\g,z)=\theta(\g z)/\theta(z)$ for $\g\in\Gz{4\Ob}$
	\cite[\S2.1]{Takai}. By \cite[p.~286]{Sh} it satisfies
	\begin{equation}\label{eq:hsquared}
		h(\g,z)^2=\operatorname{sgn}\!\big(\Norm_{F/\mathbb{Q}}(d_\g)\big)\,
		\psi_0^\ast\!\big(d_\g\,\il(\g)^{-1}\big)\,j(\g,z)
		=\operatorname{sgn}\!\big(\Norm_{F/\mathbb{Q}}(d_\g)\big)\,\psi_0^\ast(d_\g\Ob)\,j(\g,z),
	\end{equation}
	where $\il(\g)=c_\g\Diff^{-1}+d_\g\Ob$, $\psi_0$ is the Hecke character attached to
	$F(\sqrt{-1})/F$, and $\psi_0^\ast(\fa)=\big(\tfrac{F(\sqrt{-1})/F}{\fa}\big)$ is the
	associated ideal character \cite[(3.17)]{Sh}.
	
Following \cite[\S2.1]{Takai} we work with the theta-quotient normalisation of
\cite{Sh}. Let $k$ be an odd positive integer; the weight is the parallel
half-integral weight $k/2$, with factor of automorphy $h(\g,z)^{k}$, and for a
function $f$ on $\HCal^{\ba}$ and $\g\in\Gz{4\Ob}$ one sets
\begin{equation*}
	(f|_{k}[\g])(z)=f(\g z)\,h(\g,z)^{-k}.
\end{equation*}
	
	Let $\mf$ be an integral ideal with $4\Ob\mid\mf$, so that
	$\Gz{\mf}\subseteq\Gz{4\Ob}$ and $f|_{k}[\g]$ is defined for every
	$\g\in\Gz{\mf}$. Let $\psi$ be a finite-order Hecke character of $F$,
	that is, a continuous character
	\begin{equation*}
		\psi:\bA^{\times}\longrightarrow\mathbb{C}^{\times}
	\end{equation*}
	of finite order which is trivial on the image of $F^{\times}$.
	Suppose that the conductor of $\psi$ divides $\mf$. The restriction of
	$\prod_{v\mid\mf}\psi_v$ to $\prod_{v\mid\mf}\Ob_v^\times$ then factors
	through $(\Ob/\mf)^\times$; we denote the resulting residue class
	character by $\psi_{\mf}$ and assume that $\psi_{\mf}(-1)=1$.
	
	%checked
	\begin{definition}\label{def:hmf}
		A \emph{Hilbert modular form of parallel half-integral weight $k/2$,
			level $\mf$, and nebentypus $\psi$} is a holomorphic function
		$f:\HCal^{\ba}\to\mathbb C$, holomorphic at the cusps when
		$F=\mathbb Q$, such that
		\begin{equation*}
			f|_k[\g]=\psi_{\mf}(d_\g)\,f
			\qquad(\g\in\Gz{\mf}).
		\end{equation*}
		We write $\Mk{\mf}{\psi}$ for the space of such forms.
	\end{definition}
	
For $\b\in F$, write
$\ub{\b}=\begin{psmallmatrix}1&\b\\0&1\end{psmallmatrix}$,
and abbreviate $f|_k\g^\ast:=f|_k[\g]$. Then
\begin{equation*}
	f|_k\g^\ast=\psi_{\mf}(d_\g)f
	\quad(\g\in\Gz{\mf}),
	\qquad
	f|_k\ub{\b}^\ast(z)=f(z+\b)
	\quad(\b\in F).
\end{equation*}

	Every $f\in\Mk{\mf}{\psi}$ has a Fourier expansion
	\begin{equation}\label{eq:fourier}
		f(z)=\sum_{\xi\in F}\l(\xi,\Ob)\,e(\xi z/2)
		=\sum_{\xi\in 2^{-1}\Ob_{+}\cup\{0\}}a_\xi\,e(\xi z),
		\qquad
		\l(\xi,\Ob)\neq 0\ \Longrightarrow\ \xi\in\Ob_{\ge 0},
	\end{equation}
thus $a_\xi=\l(2\xi,\Ob)$ and the support lies in $2^{-1}\Ob$ \cite[\S2.1]{Takai}.
Here $\Ob_{\ge0}$ (resp. $\Ob_{+}$) denotes the totally non-negative (resp.
totally positive) elements.

	\subsection{Twisting operators}\label{ss:twist}
	%The technical lemma goes here.
Let $\nf$ be an integral ideal of $\Ob$, and let
\begin{equation*}
	\chi:(\Ob/\nf)^\times\longrightarrow\mathbb C^\times
\end{equation*}
be a primitive character, extended by $0$ to
$\kn:=\Ob/\nf$. We do not assume that $\chi$ arises from a Hecke
character. Put
\begin{equation*}
	q:=\Norm\nf=|\kn|.
\end{equation*}
	
	%checked
\begin{definition}\label{def:twist}
	Let $f\in\Mk{\mf}{\psi}$ have the Fourier expansion
	\eqref{eq:fourier}, and let $\chi$ be a character of
	$(\Ob/\nf)^\times$. Define
	\begin{equation*}
		(f\otimes\chi)(z)
		:=
		\sum_{\xi\in2^{-1}\Ob_{+}\cup\{0\}}
		\chi(2\xi)\,a_\xi\,e(\xi z).
	\end{equation*}
\end{definition}
This agrees with Takai's definition
\begin{equation*}
	(f\otimes\chi)(z)
	=
	\sum_{\eta\in F}
	\chi(\eta)\,\l(\eta,\Ob)\,e(\eta z/2),
\end{equation*}
since $\l(\eta,\Ob)=0$ unless $\eta\in\Ob_{\ge0}$, and the change of
variables $\eta=2\xi$ gives $a_\xi=\l(2\xi,\Ob)$.
	
	We next choose the additive character appearing in the twisting formula.
	Set
	\begin{equation*}
		\Wn:=2\Diff^{-1}\nf^{-1},
		\qquad
		\Vn:=\Wn/2\Diff^{-1}.
	\end{equation*}
	Since
	\begin{equation*}
		2\Diff^{-1}\nf^{-1}/2\Diff^{-1}
		\cong
		\Ob/\nf
	\end{equation*}
	as $\Ob$-modules, $\Vn$ is cyclic. We call $w\in\Wn$
	\emph{primitive} if its image generates $\Vn$; equivalently,
	\begin{equation*}
		w\notin2\Diff^{-1}\nf^{-1}\pf
		\qquad\text{for every prime }\pf\mid\nf.
	\end{equation*}
	For such $w$, put
	\begin{equation*}
		\psi_w(x):=e(xw/2),
		\qquad x\in\Ob.
	\end{equation*}
	\begin{lemma}\label{lem:add}
		If $w \in W_{\nf}$ is primitive, then $\psi_w$ is a primitive additive character of $\kn=\Ob/\nf$.
	\end{lemma}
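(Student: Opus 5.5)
Three things have to be checked: $\psi_w$ is well defined on $\kn$, it is a character, and it is primitive. The first two are routine; the third is where the cyclicity of $\Vn$ enters.

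\emph{Well defined.} For $x\in\nf$ we have $xw\in\nf\cdot 2\Diff^{-1}\nf^{-1}=2\Diff^{-1}$, so $xw/2\in\Diff^{-1}$. Since $e$ is trivial on $\Diff^{-1}$, this gives $\psi_w(x)=1$.

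\emph{Character.} Additivity of $e$ makes $\psi_w$ a homomorphism $\Ob\to\mathbb C^\times$. By the previous step it factors through $\kn=\Ob/\nf$.

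\emph{Primitivity.} Recall that an additive character of $\kn$ is primitive if it is nontrivial on every nonzero ideal of $\kn$. Equivalently, for every prime $\pf\mid\nf$ it is nontrivial on $\nf\pf^{-1}/\nf$, since every nonzero ideal of $\kn$ contains one of these minimal ideals. So fix $\pf\mid\nf$ and suppose, for contradiction, that $\psi_w$ is trivial on $\nf\pf^{-1}$, i.e. $e(xw/2)=1$ for all $x\in\nf\pf^{-1}$.

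\emph{Main step.} The fractional ideal $\fa:=w\,\nf\pf^{-1}/2$ is an $\Ob$-module, and $\Tr(\fa)\subseteq\ZZ$ means $\fa\subseteq\Diff^{-1}$ by definition of the inverse different. I will justify $\Tr(\fa)\subseteq\ZZ$ as follows. Since $e$ is trivial on $\fa$, we have $\Tr(y)\in\ZZ$ for all $y\in\fa$. Then, using that $\fa$ is an $\Ob$-module, we get $\Tr(\Ob\fa)\subseteq\ZZ$, hence $\fa\subseteq\Diff^{-1}$.

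Therefore $w\nf\pf^{-1}\subseteq 2\Diff^{-1}$. Multiplying by the invertible ideal $\nf^{-1}\pf$ gives
\[
w\Ob\subseteq 2\Diff^{-1}\nf^{-1}\pf ,
\]
so $w\in 2\Diff^{-1}\nf^{-1}\pf$. This contradicts the primitivity of $w$.

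Hence $\psi_w$ is nontrivial on every minimal ideal $\nf\pf^{-1}/\nf$, and so it is a primitive additive character of $\kn$.
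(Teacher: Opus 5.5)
Your proof is correct and follows essentially the same route as the paper. You get well-definedness from $\nf\cdot 2\Diff^{-1}\nf^{-1}=2\Diff^{-1}$, then reduce primitivity to the ideals $\nf\pf^{-1}$, and use the defining property of $\Diff^{-1}$ to turn triviality on $\nf\pf^{-1}$ into $w\in 2\Diff^{-1}\nf^{-1}\pf$. The only differences are cosmetic: you argue by contradiction, and you spell out the $\Ob$-module step and the reduction to minimal ideals, which the paper compresses into a chain of equivalences.
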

	\begin{proof}
		For $x\in\nf$ we have $xw/2\in\nf\cdot\Diff^{-1}\nf^{-1}=\Diff^{-1}$, hence $e(xw/2)=1$; so
		$\psi_w$ factors through $\Ob/\nf$. Let $\pf$ be a prime factor of $\nf$. Then $\psi_w$ is
		trivial on the strictly larger ideal $\nf\pf^{-1}\supsetneq\nf$ iff $\Tr(x\cdot w/2)\in\mathbb{Z}$
		for all $x\in\nf\pf^{-1}$, iff $w/2\in\Diff^{-1}\nf^{-1}\pf$. Since $w\notin 2\Diff^{-1}\nf^{-1}\pf$
		for every prime factor $\pf$ of $\nf$, $\psi_w$ does not factor through $\Ob/\nf\pf^{-1}$; hence
		it is nontrivial on every ideal strictly larger than $\nf$, i.e.\ primitive $\bmod\,\nf$.
	\end{proof}
	Define the Gauss sum $\displaystyle\tau(\chi,w):=\sum_{a\in\kn}\chi(a)\,\psi_w(a)$. Since
	$\chi$ and $\psi_w$ are both primitive $\bmod\,\nf$, $|\tau(\chi,w)|^2=q$; in particular
	$\tau(\chi,w)\neq0$. Moreover, for every $m\in\kn$,
	\begin{equation}\label{eq:star}
		\sum_{a\in\kn}\chi(a)\,\psi_w(am)=\bar\chi(m)\,\tau(\chi,w).
	\end{equation}
	\begin{lemma}\label{lem:11}
		Assume that $\chi$ is a primitive character of $(\Ob/\nf)^{\times}$. For every $f$ as in \eqref{eq:fourier} and every primitive $w \in W_{\nf}$,
		\begin{equation*}
			(f\otimes\chi)(z)=\frac{1}{\tau(\bar\chi,w)}\sum_{r\in\Ob/\nf}\bar\chi(r)\,f(z+rw).
		\end{equation*}
	\end{lemma}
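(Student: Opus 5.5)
The plan is to substitute the Fourier expansion \eqref{eq:fourier} into the right-hand side, swap the finite sum over $r$ with the Fourier sum, and evaluate the resulting character sum with \eqref{eq:star}.

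For $\xi\in 2^{-1}\Ob_{+}\cup\{0\}$ we have $e(\xi(z+rw))=e(\xi z)\,e(\xi r w)$. Write $\eta=2\xi\in\Ob$. Then
\begin{equation*}
e(\xi r w)=e(\eta r w/2)=\psi_w(r\eta).
\end{equation*}
By Lemma~\ref{lem:add}, $\psi_w$ is a well-defined character of $\Ob/\nf$, so this quantity depends only on the class of $r$ modulo $\nf$. The same holds for $\bar\chi(r)$. Hence the sum over $r\in\Ob/\nf$ is well defined, and the translate $f(z+rw)$ can be taken for any lift of $r$. The apparent dependence on the lift is harmless, because $f(z+\b)=f(z)$ for $\b\in 2\Diff^{-1}$: the Fourier support lies in $2^{-1}\Ob$, and $e(\xi\b)=1$ for such $\b$. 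This is the one point I would check carefully, namely that the support claim gives $\xi\cdot 2\Diff^{-1}\subseteq\Diff^{-1}$.

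Interchanging the sums (the $r$-sum is finite and the Fourier series converges absolutely on $\HCal^{\ba}$) gives
\begin{equation*}
\sum_{r\in\Ob/\nf}\bar\chi(r)f(z+rw)=\sum_{\xi}a_\xi\,e(\xi z)\sum_{r\in\kn}\bar\chi(r)\,\psi_w(r\cdot 2\xi).
\end{equation*}
Now apply \eqref{eq:star} with $\chi$ replaced by $\bar\chi$, which is also primitive modulo $\nf$, and with $m=2\xi \bmod \nf$. The inner sum equals $\chi(2\xi)\,\tau(\bar\chi,w)$.

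We need \eqref{eq:star} to hold for every $m$, including $m$ not coprime to $\nf$, where both sides should vanish. For units $m$ this is the substitution $a\mapsto am^{-1}$. For non-units it is the standard vanishing of Gauss-type sums for a primitive character. I would prove that vanishing directly if it is not already taken as given: if $\pf\mid(m,\nf)$, then $\psi_w(am)$ is invariant under $a\mapsto a(1+x)$ for $x\in\nf\pf^{-1}$. Summing over this subgroup kills $\bar\chi$, because primitivity means $\bar\chi$ is nontrivial on $1+\nf\pf^{-1}$. I expect this to be the main obstacle, mainly in the bookkeeping when $\nf$ is not squarefree. When $\pf\mid\nf$ exactly once, the subgroup $1+\nf\pf^{-1}$ must be replaced by the appropriate coset argument.

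Finally, divide by $\tau(\bar\chi,w)$. This is nonzero, since $|\tau(\bar\chi,w)|^2=q$. The resulting series is exactly $\sum_\xi\chi(2\xi)a_\xi e(\xi z)=(f\otimes\chi)(z)$ by Definition~\ref{def:twist}.
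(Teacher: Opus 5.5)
Your proof is correct and follows the paper's argument. You expand $f(z+rw)$ via \eqref{eq:fourier}, use invariance under $2\Diff^{-1}$ to make the sum over $r \bmod \nf$ well defined, and evaluate the inner sum by \eqref{eq:star} for $\bar\chi$ with $m=2\xi$.

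Your extra check of \eqref{eq:star} for non-unit $m$, which the paper takes as standard, goes through uniformly if you let $u$ range over the kernel of $(\Ob/\nf)^\times\to(\Ob/\nf\pf^{-1})^\times$. Then $u-1\in\nf\pf^{-1}$ and $m\in\pf$ give $\psi_w(aum)=\psi_w(am)$, and primitivity gives some $u$ with $\bar\chi(u)\neq1$, so no separate case for $\pf\,\|\,\nf$ is needed.
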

	\begin{proof}
		Each summand depends only on $r\bmod\nf$: for $r\in\nf$, $rw\in\nf\cdot2\Diff^{-1}\nf^{-1}
		=2\Diff^{-1}$, and $f$ is invariant under $z\mapsto z+2\Diff^{-1}$ (as $e(\xi\b)=1$ for
		$\xi\in2^{-1}\Ob,\ \b\in2\Diff^{-1}$). Expanding $f(z+rw)=\sum_{ \xi\in2^{-1}\Ob_+ \cup \{0\}} a_\xi e(\xi z)e(\xi rw)$,
		\begin{equation*}
			\sum_{r \in (\Ob/\nf)^{\times}}\bar\chi(r)\,f(z+rw)
			=\sum_{ \xi\in2^{-1}\Ob_+ \cup \{0\}} a_\xi\,e(\xi z)\sum_{r}\bar\chi(r)\,e(\xi rw).
		\end{equation*}
		Now $e(\xi rw)=\psi_w\big(r\cdot 2\xi\big)$;
		so with $m=2\xi$ and \eqref{eq:star} applied to $\bar\chi$,
		\begin{equation*}
			\sum_{r}\bar\chi(r)\,e(\xi rw)=\sum_{r}\bar\chi(r)\,\psi_w(rm)
			=\chi(m)\,\tau(\bar\chi,w)=\chi(2\xi)\,\tau(\bar\chi,w).
		\end{equation*}
		Hence $\sum_r\bar\chi(r)f(z+rw)=\tau(\bar\chi,w)\sum_\xi\chi(2\xi)a_\xi e(\xi z)
		=\tau(\bar\chi,w)\,(f\otimes\chi)(z)$.
	\end{proof}
	%checked
	\subsection{Metaplectic preliminaries}\label{ss:meta}

	Write $\G':=\Gz{4\Ob}$. For $\g\in\G'$, the factor 
	$h(\g,z)=\theta(\g z)/\theta(z)$ is defined as in 
	\S\ref{ss:background}. For Lemma~\ref{lem:phase}, we also use 
	Shimura's metaplectic setup \cite[\S3]{Sh}, specialised to $m=1$, so 
	that $G_F=\SL_2(F)$. 
	
	Put 
	\begin{equation*} 
		\iota=\begin{psmallmatrix}0&-1\\1&0\end{psmallmatrix}, 
		\qquad 
		P_{\bA}=\{g\in G_{\bA}:c_g=0\}, 
		\qquad 
		P_F=G_F\cap P_{\bA}, 
	\end{equation*} 
	and let 
	\begin{equation*} 
		\Omega_{\bA}=P_{\bA}\iota P_{\bA}. 
	\end{equation*} 
	Let 
	\begin{equation*} 
		\mathsf r_\Omega:\Omega_{\bA}\longrightarrow M_{\bA} 
	\end{equation*} 
	be the splitting of \cite[(3.1c), pp.~292--293]{Sh}. It is compatible 
	with the rational splitting $\mathsf r$ of \S\ref{ss:background}: 
	\begin{equation*} 
		\mathsf r=\mathsf r_\Omega 
		\qquad\text{on }P_F\iota P_F 
	\end{equation*} 
	\cite[p.~293]{Sh}. 
	
	Let $e_v$ and $e_{\bA}$ be the additive characters of 
	\cite[\S3]{Sh}, and normalize the Haar measure on $F_v$ so that 
	$\Ob_v$ has measure $\Norm(\Diff_v)^{-1/2}$, self-dual with respect to 
	$(x,y)\mapsto e_v(xy)$. For $v\in\bff$, the character $e_v$ is trivial 
	on $\Diff_v^{-1}$. For $v\in\ba$, put 
	\begin{equation*} 
		e_{\ba}=\prod_{v\in\ba}e_v; 
	\end{equation*} 
	on the diagonal copy of $F$, this agrees with the character $e$ of 
	\S\ref{ss:background}. We also write 
	\begin{equation*} 
		|x|_{\bA}=\prod_v|x_v|_v 
		\qquad(x\in\bA_F^\times), 
	\end{equation*} 
	so that $|c|_{\bA}=1$ for $c\in F^\times$ by the product formula. 
	
	The metaplectic group $M_{\bA}$ acts on the Schwartz--Bruhat space 
	$\mathcal S(X_{\bA})$ through the Weil representation; we write 
	$[\a f](x)$ for this action. For $\g\in\G'$, write 
	\begin{equation*} 
		\hstar{\g}{z}=h(\g^\ast,z) 
	\end{equation*} 
	for Shimura's metaplectic factor. By 
	\cite[Proposition~3.2]{Sh} and \cite[(3.10a)]{Sh}, 
	\begin{equation}\label{eq:h10a} 
		\bigl[\g^\ast\Phi_{z,0}\bigr](0) 
		= 
		\Norm\bigl(\il(\g)\bigr)^{1/2} 
		\hstar{\g}{z}^{-1}, 
	\end{equation} 
	where 
	\begin{equation*} 
		\il(\g)=c_\g\Diff^{-1}+d_\g\Ob 
	\end{equation*} 
	as in \S\ref{ss:background}. Moreover, 
	$\hstar{\g}{\cdot}$ is nowhere vanishing on $\HCal^{\ba}$. 
	
	Finally, $\Phi_{z,u}$ denotes the function of 
	\cite[(3.9a)--(3.9c), (3.6)]{Sh}. In Shimura's notation, 
	$m=1$ and $X=F^1_1=F$, so that $X_{\bA}=\bA_F$, equipped with the 
	restricted product of the local measures fixed above. We use the 
	corresponding restricted tensor-product factorization, with 
	$\mathbf 1_{\Ob_v}$ denoting the characteristic function of $\Ob_v$.
	
%	\begin{lemma}\label{lem:factor}
%		Let $f\colon\bA_F\to\mathbb{C}$ be of the form $f=\bigotimes_{v}f_v$
%			with every $f_v\colon F_v\to\mathbb{C}$ integrable and $f_v=\mathbf 1_{\Ob_v}$ for
%			all but finitely many $v$. Then $f$ is integrable on $X_{\bA}$ and
%		\begin{equation*}
%			\int_{X_{\bA}}f(t)\,dt=\prod_{v}\int_{F_v}f_v(t)\,dt ,
%		\end{equation*}
%		\textcolor{blue}{a product with only finitely many factors different from $1$: when
%			$f_v=\mathbf 1_{\Ob_v}$ the corresponding factor equals $\Norm(\Diff_v)^{-1/2}$,
%			and this is $1$ for every $v$ unramified over $\QQ$.}
%	\end{lemma}
%	
%checked

	\subsection{Phase stability}\label{ss:phase}
	The following lemma gives the identity of factors of automorphy needed for the twisting argument. Its proof passes from the classical theta factor to Shimura's metaplectic factor and uses the Weil representation to reduce the identity to
	local calculations.
\begin{lemma}\label{lem:phase} 
	Assume $\mf\subseteq 4\Ob$ and $\nf\neq\Ob$. Let 
	$\g=\begin{psmallmatrix}a&b\\c&d\end{psmallmatrix}\in\Gz{\mf\nf^{2}}$, let $w\in\Wn$, 
	and let $r,t_r\in\Ob$ satisfy $t_r\equiv d^{2}r\pmod{\nf}$; put $\b_r=rw$, 
	$\b_{t_r}=t_r w$ and 
	\begin{equation}\label{eq:twist} 
		\g_r:=\ub{\b_r}\,\g\,\ub{\b_{t_r}}^{-1}, 
		\qquad \ub{\b_r}\,\g=\g_r\,\ub{\b_{t_r}}. 
	\end{equation} 
	Then $\g_r\in\Gz{\mf\nf^{2}}\subseteq\G'$, so that both sides below are defined by the 
	theta quotient of \S\ref{ss:background}, and for all $z\in\HCal^{\ba}$, 
	\begin{equation*} 
		h(\g,z)=h(\g_r,\,z+\b_{t_r}). 
	\end{equation*} 
\end{lemma}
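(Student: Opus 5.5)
The plan is to prove the identity for Shimura's metaplectic factor $\hstar{\cdot}{\cdot}$, where it follows from the cocycle relation, and then transfer it to the theta quotient $h$ on $\G'$. First I would verify the membership claim by a direct matrix computation. With $\b_r=rw$ and $\b_{t_r}=t_rw$, \eqref{eq:twist} gives
\begin{equation*}
	\g_r=\begin{pmatrix} a+c\b_r & b+d\b_r-a\b_{t_r}-c\b_r\b_{t_r}\\ c & d-c\b_{t_r}\end{pmatrix}.
\end{equation*}
Since $c\in2^{-1}\mf\nf^2\Diff$ and $w\in2\Diff^{-1}\nf^{-1}$, we get $cw\in\mf\nf\subseteq\Ob$. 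Hence the diagonal entries lie in $\Ob$, and the lower left entry is unchanged.

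For the upper right entry, I would first bound the quadratic term: $cw^2\in 2\mf\Diff^{-1}\subseteq2\Diff^{-1}$. The linear term is $w(rd-at_r)$. Because $bc\in\mf\nf^2$, we have $ad\equiv1\pmod{\nf}$. Combined with $t_r\equiv d^2r\pmod{\nf}$, this gives $rd-at_r\equiv rd(1-ad)\equiv0\pmod{\nf}$, so $w(rd-at_r)\in2\Diff^{-1}$. Therefore $\g_r\in\Gz{\mf\nf^2}\subseteq\G'$.

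Next I would pass to the metaplectic side. The rational splitting $\mathsf r:G_F\to M_{\bA}$ is a homomorphism, so \eqref{eq:twist} lifts to $\ub{\b_r}^\ast\g^\ast=\g_r^\ast\,\ub{\b_{t_r}}^\ast$ in $M_{\bA}$. Shimura's factor satisfies the cocycle relation $\hstar{\a\b}{z}=h(\a^\ast,\b z)\,\hstar{\b}{z}$. For a rational unipotent element $\ub{\b}$ with $\b\in F$, this factor is identically $1$. I would check this from \eqref{eq:h10a}: for $\ub{\b}^\ast$ the Weil representation acts on $\Phi_{z,0}$ by multiplication by $e_{\bA}(\b x^2/2)$, which equals $1$ at $x=0$, and $\il(\ub{\b})=\Ob$. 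Applying the cocycle relation to both sides of the lifted identity then gives
\begin{equation*}
	\hstar{\g}{z}=\hstar{\g_r}{z+\b_{t_r}}.
\end{equation*}

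The main step I expect to be delicate is the comparison $h(\g,z)=\hstar{\g}{z}$ for all $\g\in\G'$, with the same normalisation as in \S\ref{ss:background}. I would argue via theta functions. Write $\theta(z)$ as the theta series attached to $\Phi=\bigotimes_{v\in\bff}\mathbf 1_{\Ob_v}$ (with the archimedean Gaussian), so that $\theta(z)$ is the sum over $\xi\in F$ of the Weil-representation function evaluated at $\xi$. Then $\theta(\g z)$ is obtained by letting $\g^\ast$ act on this function. The $G_F$-invariance of the theta distribution, together with the local fact that $\g_v\in D_v[\Ob,4\Ob]$ fixes $\mathbf 1_{\Ob_v}$ for $v\in\bff$, then gives $\theta(\g z)=\hstar{\g}{z}\,\theta(z)$. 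Here the archimedean action is governed by \eqref{eq:h10a} and \cite[Proposition~3.2]{Sh}. At dyadic $v$ the scalar by which $\g_v$ acts on $\mathbf 1_{\Ob_v}$ may be a root of unity rather than $1$; this needs care. If that check is awkward, I would fall back on citing Shimura's identification $h(\g,z)=\hstar{\g}{z}$ on $\G'$ from \cite[\S3]{Sh}. That identification is implicit in \eqref{eq:hsquared} and is the normalisation Takai uses.

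Since both $\g$ and $\g_r$ lie in $\G'$ by the first step, the two identities combine to give
\begin{equation*}
	h(\g,z)=\hstar{\g}{z}=\hstar{\g_r}{z+\b_{t_r}}=h(\g_r,z+\b_{t_r}),
\end{equation*}
which is the statement of the lemma.
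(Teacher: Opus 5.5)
You have a genuine gap in the cocycle step; the membership check $\g_r\in\Gz{\mf\nf^2}$ is correct and matches the paper. The identification $h=\hstar{\g}{z}$ on $\G'$ is also fine, since it is the same input from Shimura that the paper uses.

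\textbf{Where the cocycle step fails.} The cocycle property of Shimura's factor rests on $\mathsf r(\sigma)$ sending $\Phi_{z,0}$ to a scalar multiple of $\Phi_{\sigma z,0}$. This holds for $\sigma\in\G'$. A rational unipotent on the \emph{left} is harmless after evaluation at $x=0$, so $h\bigl(\ub{\b_r}^\ast\g^\ast,z\bigr)=\hstar{\g}{z}$ is fine.

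On the right-hand side, however, $\ub{\b_{t_r}}$ sits to the right of $\g_r$, and $\b_{t_r}=t_rw\in2\Diff^{-1}\nf^{-1}$ is in general not in $2\Diff^{-1}$. At $v\mid\nf$, $\ub{\b_{t_r}}^\ast$ multiplies $\mathbf 1_{\Ob_v}$ by the non-constant function $e_v(\b_{t_r}t^2/2)$. Hence $\ub{\b_{t_r}}^\ast\Phi_{z,0}$ is not a multiple of $\Phi_{z+\b_{t_r},0}$. Checking the value at $x=0$ only shows that a would-be factor equals $1$. It does not give $h\bigl(\g_r^\ast\ub{\b_{t_r}}^\ast,z\bigr)=\hstar{\g_r}{z+\b_{t_r}}\,h\bigl(\ub{\b_{t_r}}^\ast,z\bigr)$. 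In fact no cocycle on the group generated by $\mathsf r(P_F)$ and $\mathsf r(\G')$ can be trivial on all rational unipotents and agree with $h$ on $\G'$.

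\textbf{Counterexample.} Your argument uses $\g\in\Gz{\mf\nf^2}$ only to get $\g_r\in\G'$, and the identity is false without the deeper level. Take $F=\QQ$, so $\theta(z)=\sum_n e^{\pi i n^2 z}$ and $\G'=\{b,c\in2\ZZ\}$. Take $\g=\begin{psmallmatrix}1&0\\2&1\end{psmallmatrix}$ with $\nf=2\ZZ$ and $w=r=t_r=1$, so $\b_r=\b_{t_r}=1$. Only the level hypothesis is violated. Then $\g_r=\begin{psmallmatrix}3&-2\\2&-1\end{psmallmatrix}\in\G'$, and your steps would give $h(\g,z)=h(\g_r,z+1)$. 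But
$h(\g_r,z+1)=\theta(\g z+1)/\theta(z+1)=-i\,h(\g,z)$.
To see this, write $\g=\iota\,\ub{-2}\,\iota^{-1}$ and use
$\theta(-1/\tau)=\sqrt{-i\tau}\,\theta(\tau)$, $\theta(-1/\tau+1)=\sqrt{-i\tau}\,\vartheta_2(\tau)$ and $\vartheta_2(\tau-2)=-i\,\vartheta_2(\tau)$.

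\textbf{What is missing.} The level must enter the proof of the identity itself. The paper does this via \eqref{eq:rOmega}. It writes $[\mathsf r_\Omega(\g)\Phi_{z,0}](0)$ as an archimedean factor depending only on $z+d/c$, times local integrals $J_v(d/c)$. It then proves $J_v(\s)=J_v(\s_r)$ place by place:
\begin{itemize}
\item for $v\nmid\nf$, because $\s-\s_r=\b_{t_r}\in2\Diff_v^{-1}$;
\item for $v\mid\nf$, because $\s_r/\s=d_r/d\in1+\mf\nf\Ob_v\subseteq\Ob_v^{\times2}$ by Lemma~\ref{lem:localsquare}.
\end{itemize}
The case $c=0$ is treated separately. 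The inclusion at $v\mid\nf$ is exactly where $\g\in\Gz{\mf\nf^2}$ and $\mf\subseteq4\Ob$ are used. In the example above, $d_r/d=-1$, which is not a square in $\ZZ_2$, and this accounts for the phase discrepancy.
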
 

\begin{proof} 
	Write 
	$\g_r=\begin{psmallmatrix}a_r&b_r\\ c_r&d_r\end{psmallmatrix}$. 
	Matrix multiplication gives 
	\begin{equation}\label{eq:gammar} 
		\g_r= 
		\begin{pmatrix} 
			a+\b_r c & 
			b+w\bigl(rd-(a+\b_r c)t_r\bigr)\\[2pt] 
			c & d-\b_{t_r}c 
		\end{pmatrix}, 
		\qquad 
		c_r=c,\qquad 
		d_r=d-\b_{t_r}c . 
	\end{equation} 
	Using 
	\begin{equation*} 
		w\in2\Diff^{-1}\nf^{-1}, 
		\qquad 
		c\in2^{-1}\mf\nf^2\Diff, 
		\qquad 
		t_r\equiv d^2r\pmod{\nf}, 
	\end{equation*} 
	the entries in \eqref{eq:gammar} satisfy the defining conditions of 
	$\Gz{\mf\nf^2}$. Hence 
	\begin{equation}\label{eq:ingammaprime} 
		\g,\g_r\in\Gz{\mf\nf^2} 
		\subseteq\Gz{4\Ob}=\G', 
	\end{equation} 
	the last inclusion following from $\mf\subseteq4\Ob$. 
	
	Applying \cite[(2.9c)]{Sh} with $\mathfrak c=4\Ob$, the 
	metaplectic factor of \cite[(3.10a)]{Sh} agrees on $\G'$ with 
	the theta factor. Thus 
	\begin{equation}\label{eq:metaid} 
		\hstar{\g}{z}=h(\g,z), 
		\qquad 
		\hstar{\g_r}{z+\b_{t_r}} 
		= 
		h(\g_r,z+\b_{t_r}). 
	\end{equation} 
	Therefore it suffices to prove that 
	\begin{equation*} 
		D(z):= 
		\frac{\hstar{\g_r}{z+\b_{t_r}}} 
		{\hstar{\g}{z}} 
		=1. 
	\end{equation*} 
	
	Next, we consider the case that $c=0$. 
	Then $ad=1$, so $a,d\in\Ob^\times$, and \eqref{eq:gammar} gives 
	\begin{equation*} 
		\g_r=\begin{psmallmatrix}a&b_r\\0&d\end{psmallmatrix}. 
	\end{equation*} 
	Thus $\g,\g_r\in P_F$ with 
	$d_{\g_r}=d_\g=d\in\Ob^\times$. Formula \cite[(2.9b)]{Sh} gives 
	\begin{equation*} 
		\hstar{\g}{z} 
		= 
		\hstar{\g_r}{z+\b_{t_r}} 
		= 
		\bigl|\Norm_{F/\QQ}(d)\bigr|^{1/2} 
		= 
		1, 
	\end{equation*} 
	independently of $z$. Hence $D\equiv1$. 
	
	Next we assume $c \neq 0$.  
	First, 
	\begin{equation*} 
		\il(\g)=\il(\g_r)=\Ob. 
	\end{equation*} 
	Indeed, 
	\begin{equation*} 
		c\Diff^{-1}+d\Ob\subseteq\Ob, 
		\qquad 
		1=ad-bc\in c\Diff^{-1}+d\Ob, 
	\end{equation*} 
	and the same argument applies to $\g_r$. Hence the factor 
	$\Norm(\il)^{1/2}$ in \cite[(3.10a)]{Sh} is $1$ in both cases. 
	
	Since $c_r=c\neq0$, $\g,\g_r\in P_F\iota P_F$, and the rational 
	splitting $\mathsf r$ agrees with $\mathsf r_\Omega$ on this set 
	\cite[p.~293]{Sh}. For $m=1$, evaluating \cite[(3.3)]{Sh} at 
	$x=0$ and using \cite[(3.4)]{Sh} yields 
	\begin{equation*} 
		\bigl[\mathsf r_\Omega(\g)f\bigr](0) 
		= 
		|c|_{\bA}^{1/2} 
		\int_{X_{\bA}} 
		f(yc)\, 
		e_{\bA}\!\left(\frac{cd\,y^2}{2}\right)dy. 
	\end{equation*} 
	With $t=yc$ we have $dt=|c|_{\bA}\,dy$, and 
	$|c|_{\bA}=1$ by the product formula. Thus 
	\begin{equation}\label{eq:rOmega} 
		\bigl[\mathsf r_\Omega(\g)f\bigr](0) 
		= 
		\int_{X_{\bA}} 
		f(t)\, 
		e_{\bA}\!\left(\frac{d\,t^2}{2c}\right)dt, 
	\end{equation} 
	and similarly for $\g_r$ with $d$ replaced by $d_r$. 
	
	Put 
	\begin{equation*} 
		z':=z+\b_{t_r}, 
		\qquad 
		\s:=\frac dc, 
		\qquad 
		\s_r:=\frac{d_r}{c}. 
	\end{equation*} 
	By \cite[(3.9a)--(3.9c), (3.6)]{Sh}, 
	\begin{equation*} 
		\Phi_{z,0} 
		= 
		\Phi_{\ba}(\,\cdot\,;z,0) 
		\otimes 
		\prod_{v\in\bff}\mathbf 1_{\Ob_v}, 
		\qquad 
		\Phi_{\ba}(t;z,0)=e_{\ba}(zt^2/2). 
	\end{equation*} 
	We compute, writing $I_{\infty}$ for the archimedean component of the 
	intertwining integral and $J_v$ for its component at a finite place $v$, 
	\begin{align*} 
		I_\g(z) 
		&:= 
		\bigl[\mathsf r_\Omega(\g)\Phi_{z,0}\bigr](0) 
		= 
		I_\infty(z,\s) 
		\prod_{v\in\bff}J_v(\s),\\ 
		I_{\g_r}(z') 
		&:= 
		\bigl[\mathsf r_\Omega(\g_r)\Phi_{z',0}\bigr](0) 
		= 
		I_\infty(z',\s_r) 
		\prod_{v\in\bff}J_v(\s_r), 
	\end{align*} 
	where 
	\begin{equation*} 
		J_v(\s) 
		:= 
		\int_{\Ob_v} 
		e_v\!\left(\frac{\s t^2}{2}\right)dt. 
	\end{equation*} 
	The archimedean factor depends only on $z+\s$, and 
	\eqref{eq:gammar} gives 
	\begin{equation*} 
		z'+\s_r 
		= 
		z+\b_{t_r} 
		+\frac{d-\b_{t_r}c}{c} 
		= 
		z+\s. 
	\end{equation*} 
	Hence the archimedean factors agree. Since 
	$\il(\g_r)=\Ob$, \eqref{eq:h10a} gives 
	\begin{equation*} 
		I_{\g_r}(z') 
		= 
		\hstar{\g_r}{z'}^{-1} 
		\neq0. 
	\end{equation*} 
	Thus the common archimedean factor and the finite product occurring 
	in $I_{\g_r}(z')$ are nonzero, so the common archimedean factor may 
	be cancelled. Since $\il(\g)=\il(\g_r)=\Ob$, 
	\eqref{eq:h10a} gives 
	\begin{equation}\label{eq:Dprod} 
		D(z) 
		= 
		\frac{I_\g(z)}{I_{\g_r}(z')} 
		= 
		\frac{\displaystyle\prod_{v\in\bff}J_v(\s)} 
		{\displaystyle\prod_{v\in\bff}J_v(\s_r)}. 
	\end{equation} 
	
	We compute each of the factors in \eqref{eq:Dprod}. Suppose first that $v\nmid\nf$. Since 
	$w\in2\Diff^{-1}\nf^{-1}$ and $\nf_v=\Ob_v$, we have 
	\begin{equation*} 
		\b_{t_r}=t_rw\in2\Diff_v^{-1}. 
	\end{equation*} 
	Moreover, by \eqref{eq:gammar}, 
	\begin{equation*} 
		\s-\s_r 
		= 
		\frac{d-d_r}{c} 
		= 
		\b_{t_r}. 
	\end{equation*} 
	Hence, for every $x\in\Ob_v$, 
	\begin{equation*} 
		\frac{x^2}{2}(\s-\s_r) 
		= 
		\frac{x^2\b_{t_r}}{2} 
		\in\Diff_v^{-1}. 
	\end{equation*} 
	
	Since the additive character $e_v$ is trivial on $\Diff_v^{-1}$, 
	the integrands defining $J_v(\s)$ and $J_v(\s_r)$ agree on $\Ob_v$. 
	Thus 
	\begin{equation*} 
		J_v(\s)=J_v(\s_r). 
	\end{equation*} 
	
	Now let $v\mid\nf$. Since $ad\equiv1\pmod{\pf_v}$, 
	we have $d\in\Ob_v^\times$. Put 
	\begin{equation*} 
		u_v:=d_rd^{-1} 
		= 
		1-\b_{t_r}cd^{-1}. 
	\end{equation*} 
	From the definitions of $\b_{t_r}$ and $\Gz{\mf\nf^2}$, 
	\begin{equation*} 
		\ord_v(\b_{t_r}c) 
		\geq 
		\ord_v(\mf\nf), 
	\end{equation*} 
	and hence 
	\begin{equation*} 
		u_v\in1+\mf\nf\Ob_v. 
	\end{equation*} 
	Since $v\mid\nf$ and $\mf\subseteq4\Ob$, 
	\begin{equation*} 
		\ord_v(\mf\nf) 
		\geq 
		2\ord_v(2)+1. 
	\end{equation*} 
	Therefore Lemma~\ref{lem:localsquare} gives 
	\begin{equation*} 
		u_v=\eta_v^2 
		\qquad 
		\text{for some }\eta_v\in\Ob_v^\times. 
	\end{equation*} 
	Since $\s_r=u_v\s=\eta_v^2\s$, the substitution 
	$y=\eta_vx$ preserves both $\Ob_v$ and its Haar measure, and therefore 
	\begin{equation*} 
		J_v(\s_r)=J_v(\s). 
	\end{equation*} 
	
	Thus $J_v(\s)=J_v(\s_r)$ for every finite place $v$, and 
	\eqref{eq:Dprod} gives $D\equiv1$. 
\end{proof}

%%%
	\subsection{The twisting theorem}\label{ss:twistthm}
	
		\begin{lemma}\label{lem:localsquare}
		Let $K$ be a nonarchimedean local field of characteristic $0$,
		with ring of integers $\mathcal O_K$, maximal ideal $\mathfrak p$, and
		normalized valuation $v$. Put $e=v(2)$. Then
		\begin{equation*}
			1+\mathfrak p^{\,2e+1}
			\subseteq
			\mathcal O_K^{\times2}.
		\end{equation*}
	\end{lemma}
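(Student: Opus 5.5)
Let $u\in 1+\mathfrak p^{2e+1}$. I would show that $u$ has a square root in $\mathcal O_K^\times$ by Hensel's lemma in its strong (Newton) form. The residue characteristic may be $2$, and then the ordinary simple-root version of Hensel does not apply, so the strong form is what is needed.

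Take $f(X)=X^2-u\in\mathcal O_K[X]$ and the approximate root $x_0=1$. Then $f(1)=1-u$, so $v(f(1))\ge 2e+1$. Also $f'(1)=2$, so $v(f'(1))=e$ and $v(f'(1)^2)=2e$. Hence
\[
v\bigl(f(1)\bigr)\ \ge\ 2e+1\ >\ 2e\ =\ 2\,v\bigl(f'(1)\bigr),
\]
which is exactly the hypothesis of the strong Hensel lemma. That lemma then gives a unique root $\eta\in\mathcal O_K$ of $f$ with $v(\eta-1)>v(f'(1))=e$. In particular $\eta\equiv1\pmod{\mathfrak p}$, so $\eta\in\mathcal O_K^\times$ and $u=\eta^2\in\mathcal O_K^{\times2}$.

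If one prefers a self-contained argument, run the Newton iteration $x_{n+1}=x_n-f(x_n)/f'(x_n)$ directly. By induction, $v(x_n-1)\ge e+1$, so $v(f'(x_n))=v(2x_n)=e$. Writing $\delta_n=-f(x_n)/f'(x_n)$, the Taylor identity $f(x_{n+1})=f(x_n)+f'(x_n)\delta_n+\delta_n^2=\delta_n^2$ gives
\[
v\bigl(f(x_{n+1})\bigr)=2\,v\bigl(f(x_n)\bigr)-2e.
\]
Since $v(f(x_0))-2e\ge1$, the excess $v(f(x_n))-2e$ doubles at each step. Therefore $\delta_n\to0$, and the sequence $(x_n)$ converges in the complete ring $\mathcal O_K$ to a root $\eta$ with $v(\eta-1)\ge e+1$.

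The only point needing care is this valuation bookkeeping: the $p=2$ case forces the threshold $2e+1$ rather than $1$. When $e=0$ the argument reduces to ordinary Hensel with a simple root, since $2\in\mathcal O_K^\times$, so the residue characteristic needs no separate treatment.
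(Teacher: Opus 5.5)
Your proof is correct and matches the paper's argument. Both apply the strong form of Hensel's lemma to $f(X)=X^2-u$ at $x_0=1$ using $v(f(1))\ge 2e+1>2e=2v(f'(1))$, and then observe that the root is a unit; your optional Newton-iteration check of the valuations is also sound.
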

	
	\begin{proof}
		Let $u\in1+\mathfrak p^{\,2e+1}$ and put
		$f(X)=X^2-u$. Then
		\begin{equation*}
			v(f(1))\geq2e+1>2e=2v(f'(1)).
		\end{equation*}
	        Hensel's lemma therefore gives a root of $f$ in $K$.
		Since $u$ is a unit, its square root is also a unit, and hence
		$u\in\mathcal O_K^{\times2}$.
	\end{proof}

\begin{remark}\label{newdef}
	For the twisting argument below, we use $\Mk{\mf}{\psi}$ in a slightly
	more general classical sense. If
	\begin{equation*}
		\psi:(\Ob/\mf)^\times\longrightarrow\mathbb C^\times
	\end{equation*}
	is any character with $\psi(-1)=1$, let $\Mk{\mf}{\psi}$ denote the
	space of holomorphic functions $f:\HCal^{\ba}\to\mathbb C$, subject to
	the usual holomorphy condition at the cusps when $F=\mathbb Q$, satisfying
	\begin{equation*}
		f|_k[\g]=\psi(d_\g)f
		\qquad(\g\in\Gz{\mf}).
	\end{equation*}
	This is well defined: for $\g\in\Gz{\mf}$ one has
	$a_\g d_\g\equiv1\pmod{\mf}$, so $d_\g\in(\Ob/\mf)^\times$, and
	\begin{equation*}
		d_{\g_1\g_2}\equiv d_{\g_1}d_{\g_2}\pmod{\mf}
		\qquad(\g_1,\g_2\in\Gz{\mf}).
	\end{equation*}
	
	When $\psi$ is induced by a finite-order Hecke character, this agrees
	with Definition~\ref{def:hmf}. We use the generalized notation only in
	the classical twisting argument; by Corollary~\ref{cor:quadratic-twist},
	a quadratic twist retains the original Hecke nebentypus, so the operators
	$U$ and $V$ may subsequently be applied as in Takai.
\end{remark}
	%checked
	
	\begin{lemma}\label{lem:12}
	Let $f\in\Mk{\mf}{\psi}$ and let $\chi$ be a primitive residue class character on $(\Ob/\nf)^{\times}$.  Then $f\otimes\chi\in\Mk{\mf\nf^{2}}{\psi\chi^{2}}$. 
	Here $\psi$ is a character of $(\Ob/\mf)^{\times}$ with $\psi(-1)=1$,
		as in Remark~\ref{newdef}, and $\psi\chi^{2}$ is the character of
		$(\Ob/\mf\nf^{2})^{\times}$.
\end{lemma}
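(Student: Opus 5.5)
Fix a primitive $w\in\Wn$ and use Lemma~\ref{lem:11} to write
\begin{equation*}
	f\otimes\chi=\frac{1}{\tau(\bar\chi,w)}\sum_{r\in\Ob/\nf}\bar\chi(r)\,f|_k\ub{\b_r}^\ast,
	\qquad \b_r=rw .
\end{equation*}
Holomorphy on $\HCal^{\ba}$ is immediate, since this is a finite combination of translates of $f$. The Fourier expansion of Definition~\ref{def:twist} is supported in $2^{-1}\Ob_+\cup\{0\}$, so when $F=\QQ$ holomorphy at the cusps is inherited from $f$. For $F\neq\QQ$ Koecher's principle makes the cusp condition automatic. So the real content is the transformation law under $\Gz{\mf\nf^2}$.

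Fix $\g=\begin{psmallmatrix}a&b\\c&d\end{psmallmatrix}\in\Gz{\mf\nf^2}$ and $r\in\Ob$. Choose $t_r\in\Ob$ with $t_r\equiv d^2r\pmod{\nf}$, and form $\g_r=\ub{\b_r}\g\ub{\b_{t_r}}^{-1}$ as in \eqref{eq:twist}. Lemma~\ref{lem:phase} gives $\g_r\in\Gz{\mf\nf^2}\subseteq\Gz{\mf}$ and $h(\g,z)=h(\g_r,z+\b_{t_r})$. Using $\ub{\b_r}\g=\g_r\ub{\b_{t_r}}$, we compute
\begin{align*}
	f(\g z+\b_r)\,h(\g,z)^{-k}
	&=f\bigl(\g_r(z+\b_{t_r})\bigr)\,h(\g_r,z+\b_{t_r})^{-k}\\
	&=\psi(d_{\g_r})\,f(z+\b_{t_r}).
\end{align*}
Since $d_{\g_r}=d-\b_{t_r}c$ and $\b_{t_r}c\in\mf\Ob$ by the valuation estimate in the proof of Lemma~\ref{lem:phase}, we get $\psi(d_{\g_r})=\psi(d)$.

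Summing against $\bar\chi(r)$ gives
\begin{equation*}
	(f\otimes\chi)|_k[\g]=\frac{\psi(d)}{\tau(\bar\chi,w)}\sum_r\bar\chi(r)\,f(z+t_rw).
\end{equation*}
The map $r\mapsto t_r\equiv d^2r$ is a bijection of $\Ob/\nf$, because $d$ is a unit mod $\nf$: indeed $ad-bc=1$ and $c\in\nf$ locally at primes of $\nf$. Reindexing with $s=t_r$ uses $\bar\chi(r)=\bar\chi(d^{-2}s)=\chi(d)^2\bar\chi(s)$. Each summand depends only on $t_r$ mod $\nf$, by the $2\Diff^{-1}$-periodicity already used in Lemma~\ref{lem:11}, so the arbitrary choice of the lift $t_r$ does not matter. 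Hence
\begin{equation*}
	(f\otimes\chi)|_k[\g]=\psi(d)\chi(d)^2\,(f\otimes\chi),
\end{equation*}
which is the claim with nebentypus $\psi\chi^2$ on $(\Ob/\mf\nf^2)^\times$. Note that $(\psi\chi^2)(-1)=1$ holds automatically.

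Lemma~\ref{lem:phase} already carries the main subtlety, namely the metaplectic phase. The remaining care is bookkeeping:
\begin{itemize}
	\item checking that $d_{\g_r}\equiv d\pmod{\mf}$, for which I would reuse the order computation $\ord_v(\b_{t_r}c)\ge\ord_v(\mf\nf)$ at $v\mid\nf$ and $\b_{t_r}c\in 2\Diff^{-1}\cdot 2^{-1}\mf\nf^2\Diff\subseteq\mf$ elsewhere;
	\item confirming that the slash action is a right action, so that the identity $f|_k[\g_r]=\psi(d_{\g_r})f$ may be applied at the shifted point $z+\b_{t_r}$.
\end{itemize}
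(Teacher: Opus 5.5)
Your proposal is correct and follows the paper's proof essentially step for step: Lemma~\ref{lem:11} writes $f\otimes\chi$ as a finite sum of translates, Lemma~\ref{lem:phase} moves the theta factor from $\g$ to $\g_r$, the congruence $d_{\g_r}\equiv d\pmod{\mf}$ handles the nebentypus, and the reindexing $s\equiv d^2r\pmod{\nf}$ produces the factor $\chi(d)^2$. One small slip: deduce the invertibility of $d$ modulo $\nf$ from $bc\in(2\Diff^{-1})(2^{-1}\mf\nf^2\Diff)=\mf\nf^2\subseteq\nf$, as the paper does, rather than from $c\in\nf$ alone, because $b\in 2\Diff^{-1}$ need not be integral at primes dividing both $\nf$ and $\Diff$.
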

	
\begin{proof}
	We regard $\psi$ and $\chi$ as characters modulo $\mf\nf^{2}$ via the
	natural reduction maps. Then $\psi\chi^{2}$ is a character of
	$(\Ob/\mf\nf^{2})^{\times}$, and
	\begin{equation*}
		(\psi\chi^{2})(-1)
		=
		\psi(-1)\chi(-1)^{2}
		=
		\psi(-1)
		=
		1.
	\end{equation*}
	
	Choose a primitive $w\in\Wn$ and put $\b_r=rw$. Fix
	\begin{equation*}
		\g=
		\begin{psmallmatrix}
			a&b\\ c&d
		\end{psmallmatrix}
		\in\Gz{\mf\nf^{2}}.
	\end{equation*}
	Since $bc\in\mf\nf^{2}\subseteq\nf$ and $ad-bc=1$, we have
	\begin{equation*}
		ad\equiv1\pmod{\nf},
	\end{equation*}
	so $d$ is invertible modulo $\nf$. For each $r\in\Ob/\nf$, choose
	$t_r\in\Ob$ satisfying
	\begin{equation*}
		t_r\equiv d^{2}r\pmod{\nf},
	\end{equation*}
	and put
	\begin{equation*}
		\g_r:=\ub{\b_r}\,\g\,\ub{\b_{t_r}}^{-1}.
	\end{equation*}
	Since multiplication by $d^{2}$ is a bijection of $\Ob/\nf$, the
	residue classes $t_r\bmod\nf$ run through $\Ob/\nf$ as $r$ does.
	
	By Lemma~\ref{lem:phase},
	\begin{equation*}
		\g_r\in\Gz{\mf\nf^{2}}\subseteq\Gz{\mf}
	\end{equation*}
	and
	\begin{equation*}
		h(\g_r,z+\b_{t_r})=h(\g,z).
	\end{equation*}
	Moreover, \eqref{eq:gammar} gives
	\begin{equation*}
		d_{\g_r}=d-\b_{t_r}c\equiv d\pmod{\mf},
	\end{equation*}
	since
	\begin{equation*}
		\b_{t_r}c
		\in
		(2\Diff^{-1}\nf^{-1})
		(2^{-1}\mf\nf^{2}\Diff)
		=
		\mf\nf
		\subseteq\mf.
	\end{equation*}
	The relation
	\begin{equation*}
		\ub{\b_r}\g=\g_r\ub{\b_{t_r}}
	\end{equation*}
	therefore gives, with $z':=z+\b_{t_r}$,
	\begin{equation}\label{eq:translate}
		\begin{aligned}
			f(\g z+\b_r)
			&=f(\g_r z')\\
			&=\psi(d_{\g_r})\,h(\g_r,z')^{k}f(z')\\
			&=\psi(d)\,h(\g,z)^{k}f(z+\b_{t_r}).
		\end{aligned}
	\end{equation}
	
	Using \eqref{eq:translate} together with Lemma~\ref{lem:11}, we obtain
	\begin{equation*}
		\begin{aligned}
			(f\otimes\chi)|_k\g
			&=
			\frac{h(\g,z)^{-k}}{\tau(\bar\chi,w)}
			\sum_{r\in\Ob/\nf}
			\bar\chi(r)\,f(\g z+\b_r)\\
			&=
			\frac{\psi(d)}{\tau(\bar\chi,w)}
			\sum_{r\in\Ob/\nf}
			\bar\chi(r)\,f(z+\b_{t_r}).
		\end{aligned}
	\end{equation*}
	Re-indexing by $s=t_r\bmod\nf$, we have
	\begin{equation*}
		r\equiv d^{-2}s\pmod{\nf},
	\end{equation*}
	and hence
	\begin{equation*}
		\bar\chi(r)
		=
		\bar\chi(d^{-2})\,\bar\chi(s)
		=
		\chi(d)^{2}\bar\chi(s).
	\end{equation*}
	Thus
	\begin{equation*}
		\begin{aligned}
			(f\otimes\chi)|_k\g
			&=
			\psi(d)\chi(d)^{2}
			\frac{1}{\tau(\bar\chi,w)}
			\sum_{s\in\Ob/\nf}
			\bar\chi(s)\,f(z+sw)\\
			&=
			(\psi\chi^{2})(d)\,(f\otimes\chi)(z).
		\end{aligned}
	\end{equation*}
	
	The required holomorphy follows from the finite-sum expression in
	Lemma~\ref{lem:11}; when $F=\mathbb Q$, the same expression also
	preserves holomorphy at the cusps. Therefore
	\begin{equation*}
		f\otimes\chi\in\Mk{\mf\nf^{2}}{\psi\chi^{2}}.
	\end{equation*}
\end{proof}
		
	%checked
	\begin{corollary}\label{cor:quadratic-twist}
		Let $\psi$ be a Hecke character whose conductor divides $\mf$, let
		$f\in\Mk{\mf}{\psi}$, and let $\chi$ be a primitive quadratic
		character of $(\Ob/\nf)^\times$. Then
		\begin{equation*}
			f\otimes\chi\in\Mk{\mf\nf^2}{\psi}.
		\end{equation*}
	\end{corollary}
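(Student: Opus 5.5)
The plan is to apply Lemma~\ref{lem:12} and then check that, for a quadratic $\chi$, the resulting residue class nebentypus $\psi\chi^2$ on $(\Ob/\mf\nf^2)^\times$ is the one induced by the Hecke character $\psi$. Then the membership statement can be read in the sense of Definition~\ref{def:hmf}. First, since the conductor of $\psi$ divides $\mf$, the residue class character $\psi_{\mf}$ is defined, and by assumption $\psi_{\mf}(-1)=1$. So $f\in\Mk{\mf}{\psi}$ in the sense of Remark~\ref{newdef} with the character $\psi_{\mf}$. Lemma~\ref{lem:12} then gives
\begin{equation*}
f\otimes\chi\in\Mk{\mf\nf^2}{\psi_{\mf}\chi^2},
\end{equation*}
where both characters are pulled back to $(\Ob/\mf\nf^2)^\times$ along the reduction maps.

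Second, since $\chi$ is quadratic, $\chi^2$ is the trivial character on $(\Ob/\nf)^\times$. Its pullback to $(\Ob/\mf\nf^2)^\times$ is therefore trivial, because every $d\in(\Ob/\mf\nf^2)^\times$ reduces to a unit modulo $\nf$. Hence $\psi_{\mf}\chi^2=\psi_{\mf}\circ(\text{reduction})$ as characters of $(\Ob/\mf\nf^2)^\times$.

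Third, we identify this with $\psi_{\mf\nf^2}$, the residue class character attached to the Hecke character $\psi$ at level $\mf\nf^2$. The conductor of $\psi$ divides $\mf$, hence divides $\mf\nf^2$. The character $\psi_{\mf\nf^2}$ is the restriction of $\prod_{v\mid\mf\nf^2}\psi_v$ to $\prod_{v\mid\mf\nf^2}\Ob_v^\times$. At places $v\mid\nf$ with $v\nmid\mf$, the character $\psi_v$ is unramified, so it is trivial on $\Ob_v^\times$. At places $v\mid\mf$, the factor $\psi_v|_{\Ob_v^\times}$ factors through $\Ob_v/\mf\Ob_v$. So $\psi_{\mf\nf^2}$ is exactly the pullback of $\psi_{\mf}$. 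Evenness is preserved, since $\psi_{\mf\nf^2}(-1)=\psi_{\mf}(-1)=1$. Therefore the transformation law $f\otimes\chi|_k[\g]=\psi_{\mf\nf^2}(d_\g)\,(f\otimes\chi)$ holds for all $\g\in\Gz{\mf\nf^2}$. Holomorphy, including at the cusps when $F=\QQ$, is already supplied by Lemma~\ref{lem:12}. This gives $f\otimes\chi\in\Mk{\mf\nf^2}{\psi}$ in the sense of Definition~\ref{def:hmf}.

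The only point needing care is the third step. One must check that the local factors at primes dividing $\nf$ but not $\mf$ contribute trivially. This is exactly the unramifiedness of $\psi$ there, which follows from the conductor hypothesis. The rest is formal.
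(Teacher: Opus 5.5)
Your proposal is correct and follows the paper's proof. You apply Lemma~\ref{lem:12} to obtain nebentypus $\psi\chi^2$ at level $\mf\nf^2$, use that $\chi^2$ is trivial, and conclude membership in $\Mk{\mf\nf^2}{\psi}$ in the sense of Definition~\ref{def:hmf}; your third step also makes explicit the identification $\psi_{\mf\nf^2}=\psi_{\mf}\circ\mathrm{red}$, via unramifiedness of $\psi_v$ at $v\mid\nf$ with $v\nmid\mf$, which the paper leaves implicit in the phrase ``in the original sense.''
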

	
	\begin{proof}
		Regard $\psi$ as the character of $(\Ob/\mf)^\times$ occurring in
		the transformation law. By Lemma~\ref{lem:12},
		\begin{equation*}
			f\otimes\chi\in\Mk{\mf\nf^2}{\psi\chi^2}.
		\end{equation*}
		Since $\chi$ is quadratic, $\chi^2$ is the trivial character on
		$(\Ob/\nf)^\times$. Hence the nebentypus of $f\otimes\chi$ is
		$\psi$. Since $\psi$ is a Hecke character whose conductor divides
		$\mf\mid\mf\nf^2$, the twisted form lies in
		$\Mk{\mf\nf^2}{\psi}$ in the original sense.
	\end{proof}
	\subsection{A dyadic local quadratic character detecting ramification}
%checked

	\begin{lemma}\label{lem:dyadic}
		Let $F$ be a number field and let $\qf\mid2$ be a prime ideal of $\Ob$.
		Put $K=F_\qf$, let $\Ob_{\qf}$ be the valuation ring of $K$, and let
		$v_\qf$ be the normalized valuation on $K$. Set $e=v_\qf(2)$.
		Then there exist an integer $n$ with $2\leq n\leq2e+1$, a primitive
		quadratic character
		\begin{equation*}
			\chi_\qf:(\Ob/\qf^{\,n})^\times
			\longrightarrow\{\pm1\},
		\end{equation*}
		extended by $0$ to $\Ob/\qf^{\,n}$, and a sign
		$\varepsilon\in\{\pm1\}$ such that, for every
		$\xi\in\tfrac12\Ob$,
		\begin{equation*}
			\chi_\qf(2\xi)=\varepsilon
			\quad\Longrightarrow\quad
			\qf\text{ ramifies in }F(\sqrt{-2\xi})/F.
		\end{equation*}
		Moreover, if $a_0\in\Ob$ is a $\qf$-adic unit and $\qf$ ramifies in
		$F(\sqrt{-a_0})/F$, then $\chi_\qf$ and $\varepsilon$ may be chosen so that
		\begin{equation*}
			\chi_\qf(a_0)=\varepsilon.
		\end{equation*}
	\end{lemma}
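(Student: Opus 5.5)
The plan is to work entirely with the local unit group $U=\Ob_\qf^\times$ and the subgroup $H=U\cap K^{\times2}$. For a $\qf$-adic unit $u$, the extension $K(\sqrt{-u})/K$ is unramified exactly when $-u$ lies in $U_{\mathrm{nr}}:=U\cap(K^{\times2}\cdot\{\text{unit of }K(\sqrt{\delta})\})$, i.e.\ $-u\in H\cup \delta H$ with $\delta$ a unit generating the unramified quadratic extension. This set $H\cup\delta H$ is a subgroup of index $[U:H]/2$ in $U$, and $[U:H]=2\cdot 2^{[K:\QQ_2]}\geq 4$, so $U_{\mathrm{nr}}$ is a proper subgroup. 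By Lemma~\ref{lem:localsquare}, $1+\qf^{2e+1}\subseteq H\subseteq U_{\mathrm{nr}}$, so $U_{\mathrm{nr}}$ is the preimage of a proper subgroup of $(\Ob/\qf^{2e+1})^\times$.

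First I will choose a quadratic character $\chi'$ of $(\Ob/\qf^{2e+1})^\times$ that is trivial on the image of $-U_{\mathrm{nr}}$ (or rather detects a coset), and then replace it by its primitive version of conductor $\qf^{n}$. Concretely, since $U/U_{\mathrm{nr}}$ is a nontrivial elementary abelian $2$-group, there is a nontrivial character $\chi'\colon U/U_{\mathrm{nr}}\to\{\pm1\}$. Pulled back to $U$, it factors through $(\Ob/\qf^{2e+1})^\times$. Let $\qf^n$ be its conductor and $\chi_\qf$ the induced primitive character mod $\qf^n$. Here $n\le 2e+1$, and $n\ge2$ because $(\Ob/\qf)^\times$ has odd order, so there is no nontrivial quadratic character of conductor $\qf$. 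Put $\varepsilon'=-1$. Then $\chi'(x)=-1$ forces $x\notin U_{\mathrm{nr}}$.

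The sign $-1$ in $-2\xi$ is handled as follows. If $u=2\xi$ is a unit, I want the condition $\chi_\qf(u)=\varepsilon$ to force $-u\notin U_{\mathrm{nr}}$. I therefore set $\varepsilon:=-\chi_\qf(-1)$. Then $\chi_\qf(u)=\varepsilon$ means $\chi_\qf(-u)=-1$, so $-u\notin U_{\mathrm{nr}}$, and $\qf$ ramifies in $F(\sqrt{-2\xi})/F$. If $2\xi$ is not a unit, then $\chi_\qf(2\xi)=0\neq\varepsilon$ by the extension by zero, since $\qf\mid 2\xi$ makes it non-invertible mod $\qf^n$ with $n\ge1$. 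So the implication holds vacuously in that case.

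For the "moreover" clause, suppose $a_0$ is a unit with $\qf$ ramified in $F(\sqrt{-a_0})$. Then $-a_0\notin U_{\mathrm{nr}}$, so I can choose $\chi'$ with $\chi'(-a_0)=-1$; this uses that characters of the elementary abelian $2$-group $U/U_{\mathrm{nr}}$ separate nonidentity elements. With $\varepsilon=-\chi_\qf(-1)$ this gives $\chi_\qf(a_0)=\chi_\qf(-1)\chi_\qf(-a_0)=-\chi_\qf(-1)=\varepsilon$. The main point needing care is the description of $U_{\mathrm{nr}}$: that unramifiedness of $K(\sqrt{-u})$ is exactly $-u\in H\cup\delta H$, and that this is a proper subgroup. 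This rests on local class field theory, or the standard count $[K^\times:K^{\times2}]=4\cdot2^{[K:\QQ_2]}$ together with uniqueness of the unramified quadratic extension. Everything else is bookkeeping of conductors via Lemma~\ref{lem:localsquare}.
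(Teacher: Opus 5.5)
Your proposal is correct and is essentially the paper's own argument. Your nontrivial character of $U/U_{\mathrm{nr}}$, with $U_{\mathrm{nr}}=H\cup\delta H$, is exactly the paper's $\mathbb{F}_2$-linear functional $\lambda$ on $\Ob_\qf^\times/\Ob_\qf^{\times2}$ vanishing on the unramified class $[u_0]$. The rest matches the paper step by step: the conductor bound $n\leq 2e+1$ via Lemma~\ref{lem:localsquare}, the odd-order argument for $n\geq2$, the choice $\varepsilon=-\chi_\qf(-1)$, and the extra condition $\chi'(-a_0)=-1$ for the final assertion.
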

	%checked
\begin{proof}
	Put
	\begin{equation*}
		U=\Ob_{\qf}^\times/\Ob_{\qf}^{\times2}.
	\end{equation*}
	By \cite[Chapter~II, \S5, Corollary~5.8]{Neukirch},
	\begin{equation*}
		\dim_{\mathbb F_2}K^\times/K^{\times2}
		=[K:\mathbb Q_2]+2.
	\end{equation*}
	Since
	\begin{equation*}
		K^\times/K^{\times2}
		\cong(\mathbb Z/2\mathbb Z)\oplus U,
	\end{equation*}
	it follows that
	\begin{equation*}
		\dim_{\mathbb F_2}U=[K:\mathbb Q_2]+1\geq2.
	\end{equation*}
	
	By \cite[Chapter~III, \S5, Theorem~2]{SerreLocalFields},
	finite unramified extensions of $K$ correspond to finite separable
	extensions of its residue field. Since the residue field is finite,
	it has a unique quadratic extension, and hence $K$ has a unique
	unramified quadratic extension. The corresponding square class has
	even valuation, so we may choose a representative
	$u_0\in\Ob_{\qf}^\times$. Thus, for $a\in K^\times$, the extension
	$K(\sqrt a)/K$ is trivial or unramified if and only if
	\begin{equation*}
		[a]\in\{[1],[u_0]\}.
	\end{equation*}
	
	Since $[u_0]\neq[1]$ and $\dim_{\mathbb F_2}U\geq2$, there exists a
	nonzero $\mathbb F_2$-linear functional
	\begin{equation*}
		\lambda:U\longrightarrow\mathbb F_2
	\end{equation*}
	such that $\lambda([u_0])=0$. If $a_0$ is as in the final assertion
	of the lemma, then
	\begin{equation*}
		[-a_0]\notin\{[1],[u_0]\},
	\end{equation*}
	since $\qf$ ramifies in $F(\sqrt{-a_0})/F$. Hence
	$[-a_0]\notin\langle[u_0]\rangle$, and we may choose $\lambda$ so
	that, in addition,
	\begin{equation*}
		\lambda([-a_0])=1.
	\end{equation*}
	Define
	\begin{equation*}
		\widetilde\chi(a)=(-1)^{\lambda([a])},
		\qquad a\in\Ob_{\qf}^\times.
	\end{equation*}
	Then $\widetilde\chi$ is a nontrivial quadratic character and
	$\widetilde\chi(u_0)=1$.
	
	We next show that $\widetilde\chi$ has finite conductor. If
	$u\in1+\qf^{\,2e+1}\Ob_{\qf}$, then Lemma~\ref{lem:localsquare}
	gives $u\in\Ob_{\qf}^{\times2}$. Therefore
	\begin{equation*}
		1+\qf^{\,2e+1}\Ob_{\qf}
		\subseteq
		\Ob_{\qf}^{\times2}
		\subseteq
		\ker\widetilde\chi.
	\end{equation*}
	
	Let $\qf^{\,n}$ be the conductor of $\widetilde\chi$, so that $n$ is
	the smallest positive integer for which
	\begin{equation*}
		1+\qf^{\,n}\Ob_{\qf}
		\subseteq\ker\widetilde\chi.
	\end{equation*}
	The preceding inclusion gives $n\leq2e+1$. Moreover,
	$(\Ob/\qf)^\times$ has odd order, since the residue field has
	cardinality $2^f$ for some $f\geq1$. Hence it admits no nontrivial
	quadratic character. Since $\widetilde\chi$ is nontrivial, it cannot
	factor modulo $\qf$, and therefore $n\geq2$.
	
	Since
	$1+\qf^{\,n}\Ob_{\qf}\subseteq\ker\widetilde\chi$,
	$\widetilde\chi$ is constant on residue classes modulo $\qf^{\,n}$
	and therefore descends to a character on
	\begin{equation*}
		\Ob_{\qf}^\times/(1+\qf^{\,n}\Ob_{\qf})
		\cong
		(\Ob_{\qf}/\qf^{\,n}\Ob_{\qf})^\times
		\cong
		(\Ob/\qf^{\,n})^\times.
	\end{equation*}
	Denote the resulting character by
	\begin{equation*}
		\chi_\qf:(\Ob/\qf^{\,n})^\times
		\longrightarrow\{\pm1\}.
	\end{equation*}
	By the minimality of $n$, $\chi_\qf$ is primitive. Extend it by $0$
	to all of $\Ob/\qf^{\,n}$.
	
	Finally, put
	\begin{equation*}
		\varepsilon=-\widetilde\chi(-1).
	\end{equation*}
	Let $\xi\in\tfrac12\Ob$, put $a=2\xi$, and suppose
	$\chi_\qf(a)=\varepsilon$. Since $\varepsilon\neq0$, the element $a$
	is a unit at $\qf$, and hence
	\begin{equation*}
		\widetilde\chi(a)
		=
		\chi_\qf(a)
		=
		-\widetilde\chi(-1).
	\end{equation*}
	Since $\widetilde\chi(u_0)=1$, we also have
	\begin{equation*}
		\widetilde\chi(-u_0)=\widetilde\chi(-1).
	\end{equation*}
	Thus
	\begin{equation*}
		[a]\notin\{[-1],[-u_0]\},
	\end{equation*}
	and therefore
	\begin{equation*}
		[-a]\notin\{[1],[u_0]\}.
	\end{equation*}
	Hence $K(\sqrt{-a})/K$ is neither trivial nor unramified, and is
	therefore ramified. Equivalently, $\qf$ ramifies in
	$F(\sqrt{-a})/F$. Since $a=2\xi$, the first assertion follows.
	
	If $a_0$ is as in the final assertion, then our choice of $\lambda$
	gives
	\begin{equation*}
		-1
		=
		\widetilde\chi(-a_0)
		=
		\widetilde\chi(-1)\widetilde\chi(a_0).
	\end{equation*}
	Therefore
	\begin{equation*}
		\chi_\qf(a_0)
		=
		\widetilde\chi(a_0)
		=
		-\widetilde\chi(-1)
		=
		\varepsilon,
	\end{equation*}
	which proves the final assertion.
\end{proof}

	%checked
	\subsection{Takai's result}
	We first recall Takai's theorem and then give the variant needed below.
	%checked
	\begin{theorem}[Takai]\label{thm:takai1}
		Let $g=[F:\mathbb{Q}]$, let $D(F)$ be the discriminant of $F/\mathbb{Q}$, and let $p$ be a
		prime such that
		\begin{equation*}
			g\le [F(\zeta_p):F]\,2^{-\ord_2([F(\zeta_p):F])}
			\qquad\text{and}\qquad p>2g+1 .
		\end{equation*}
		Let $r$ be a positive integer, let $\varepsilon_1,\varepsilon_2,\dots,\varepsilon_r\in\{0,\pm1\}$ be
		such that $\varepsilon_i\neq0$ for some $i$, and let $\chi_1,\chi_2,\dots,\chi_r$ be quadratic
		Hecke characters of $F$ whose conductors are the integral ideals $N_1,N_2,\dots,N_r$
		respectively. Set the positive integer $N$ by $N\mathbb{Z}=N_1N_2\cdots N_r\cap\mathbb{Z}$, and put
		\begin{equation*}
			A=\frac{gN^2D(F)}{8}\prod_{\substack{d\mid ND(F)\\ d\ \mathrm{prime}}}\left(1+\frac1d\right).
		\end{equation*}
		If there is a prime number $q>(A/g)^g$ such that $q$ is unramified at $F/\mathbb{Q}$ and
		\begin{equation*}
			\sum_{\substack{\xi\in 2^{-1}\Ob_{+},\ \chi_i(2\xi)=\varepsilon_i,\ i=1,2,\dots,r\\[1pt]
					\Tr(\xi)=qg/2,\ (q\Ob,\,2\xi \Ob)\neq1}}
			\beta(\xi)\,\frac{2^{g}\,h^{-}\!\big(F(\sqrt{-2\xi})\big)}
			{Q_F(\sqrt{-2\xi})\,w_F(\sqrt{-2\xi})}\not\equiv 0\pmod p,
		\end{equation*}
		then
		\begin{equation*}
			\#\left\{\,K=F(\sqrt{-2\xi})\ \middle|\
			\begin{aligned}
				&p\nmid h^{-}(K/F),\ |\Norm_{F/\mathbb{Q}}(D(K/F))|<X,\\
				&\chi_i(2\xi)=\varepsilon_i\ \text{for}\ i=1,2,\dots,r
			\end{aligned}\right\}
			\ \gg_{F,p}\ \frac{X^{1/(2g)}}{\log X}.
		\end{equation*}
		Here $h^{-}(F(\sqrt{-2\xi}))$ is the relative class number of $F(\sqrt{-2\xi})$,
		$Q_F(\sqrt{-2\xi})$ is its Hasse index, $w_F(\sqrt{-2\xi})$ is the number of
		roots of unity in $F(\sqrt{-2\xi})$, and
		\begin{equation*}
			\beta(\xi)=\sum_{\fa,\fb}\mu(\fa)\left(\frac{F(\sqrt{-2\xi})/F}{\fa}\right)\Norm_{F/\mathbb{Q}}(\fb),
		\end{equation*}
		where the pair $(\fa,\fb)$ runs over all integral ideals prime to $2\Ob$ such that
		$(\fa\fb)^2\mid 2\xi \Ob$, and $\mu$ is the M\"obius function.
	\end{theorem}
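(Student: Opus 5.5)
This is Takai's theorem, quoted from \cite{Takai}; I would reproduce his argument in outline, following the Kohnen--Ono method transported to Hilbert modular forms.

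\emph{Step 1: the Eisenstein series.} Start from Shimura's Hilbert modular Eisenstein series $\mathcal E$ of parallel weight $3/2$ on $\Gz{4\Ob}$ \cite{Sh},\cite{Sh2}. Its $\xi$-th Fourier coefficient ($\xi\in2^{-1}\Ob_+$) is, up to a normalising constant, the quantity
\begin{equation*}
\beta(\xi)\,2^gh^-\bigl(F(\sqrt{-2\xi})\bigr)\big/\bigl(Q_F w_F\bigr).
\end{equation*}
This comes from the Dirichlet $L$-value $L(0,\chi_{-2\xi})$ via the relative class number formula, and the divisor sum $\beta(\xi)$ accounts for the non-fundamental part of $-2\xi$. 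I would check $p$-integrality of these coefficients. Here $p>2g+1$ is used to control the constant term and the Bernoulli-type denominators, and the condition on $[F(\zeta_p):F]$ guarantees that $\zeta_F(-1)$-type denominators are prime to $p$.

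\emph{Step 2: the sieve.} Form
\begin{equation*}
G=\prod_i \tfrac12\bigl(\mathcal E+\varepsilon_i\,\mathcal E\otimes\chi_i\bigr)
\end{equation*}
(with the appropriate modification when $\varepsilon_i=0$, namely $\mathcal E-\mathcal E\otimes\chi_i^2$). This isolates exactly the coefficients with $\chi_i(2\xi)=\varepsilon_i$. The twists are Hilbert modular forms of level dividing $4N_1^2\cdots N_r^2$ with the same nebentypus. Then apply a $U_q$/$V_q$-type Hecke operator combination at the inert-or-unramified prime $q$ to kill the coefficients with $(q\Ob,2\xi\Ob)\neq1$, obtaining a form $G_q$ whose coefficients vanish mod $p$ outside the set $(q,2\xi)=1$.

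\emph{Step 3: Sturm bound and counting.} Suppose, for contradiction, that all coefficients of $G$ with $2\xi$ prime to $q$ and satisfying the character conditions have class number divisible by $p$. Then $G\equiv G\mid(\text{$q$-depleted part})\pmod p$, so the difference has coefficients supported on $\xi$ with $q\mid2\xi$. A Sturm-type bound for Hilbert modular forms of this level, with bound in terms of $A$, shows that if the difference is nonzero mod $p$ it must have a nonzero coefficient of trace $\le A$. The hypothesis $q>(A/g)^g$ forces the trace-$qg/2$ coefficient, which is the displayed nonvanishing sum, to be the obstruction. That sum is nonzero mod $p$ by assumption, a contradiction, so there is some $\xi$ with $p\nmid h^-$. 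To get $\gg X^{1/(2g)}/\log X$ many fields, iterate over many primes $q$, which occur with positive density by Chebotarev. Each prime $q$ with $\Norm q\ll X^{1/(2g)}$ (trace $qg/2$ bounds $\Norm(2\xi)\ll q^g$) produces a field of discriminant norm $<X$. One then checks that a single field arises from only boundedly many $q$, since $q$ is tied to the trace of $\xi$ and $\beta$ removes square factors.

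\emph{Main obstacle.} The crux is Step 3: proving the Hilbert-modular Sturm bound modulo $p$ in the half-integral-weight setting, with explicit constant $A$, and showing that the nonvanishing hypothesis can be fed in at every $q$ in a positive-density set. I would try first to square the form, or multiply by $\theta$, to pass to integral parallel weight, and then use the restriction to the diagonal to reduce to the classical Sturm bound on $\Gamma_0(N^2D(F))$. This restriction is the source of the factor $gN^2D(F)\prod(1+1/d)/8$.
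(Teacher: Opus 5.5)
The paper does not prove this theorem. It quotes it from \cite{Takai} and revisits only the twisting step (Lemma~\ref{lem:12}, Theorem~\ref{thm:takai2}). Your Steps~1--2 match the outline the paper attributes to Takai: Shimura's half-integral weight Eisenstein series, sieving by twists, then $U$ and $V$. There is one slip in Step~2. The operator $\tfrac12(\mathcal E+\varepsilon_i\,\mathcal E\otimes\chi_i)$ keeps the coefficients with $\chi_i(2\xi)=0$, with weight $\tfrac12$. To isolate exactly $\chi_i(2\xi)=\varepsilon_i$ you need $\tfrac12(\mathcal E\otimes\chi_i^2+\varepsilon_i\,\mathcal E\otimes\chi_i)$, and these operators should be composed, not multiplied as forms.

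\textbf{The gap is Step~3, which carries the content of the theorem.} Once straightened out, your contradiction argument shows only that the sieved form $G$ is nonzero mod $p$. Sturm then yields a single $\xi$ of trace $\le A$ whose coefficient is prime to $p$, i.e.\ one field of bounded discriminant. The hypothesis concerns one prime $q$, but the count must come from a positive proportion of primes $Q\ll X^{1/(2g)}$, each producing a new field.

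Your fix, ``iterate over many primes $q$ \ldots by Chebotarev,'' does not work. The hypothesis is the non-vanishing mod $p$ of a sum of relative class numbers, which is not a Frobenius condition on $q$. Assuming it for a positive density of $q$ essentially assumes the conclusion. Two things are missing:
\begin{enumerate}
\item an argument, starting from the single seed supplied by $q$, that for a positive proportion of $Q$ the $Q$-part of $G$ (level multiplied by $Q^2$) has nonzero diagonal restriction mod $p$;
\item an argument that the coefficient Sturm then produces has a prime $\mathfrak{Q}\mid Q$ dividing $D(K/F)$, not merely the square part absorbed by $\beta$. Otherwise the same field can recur for many $Q$ and your multiplicity bound fails.
\end{enumerate}
You identify this as the crux but give no mechanism for it.

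Two further problems:
\begin{itemize}
\item \emph{Exponent bookkeeping.} The field found for $Q$ comes from the Sturm bound at level multiplied by $Q^2$. Its trace is therefore $\ll Q^2$, so $\Norm(D(K/F))\ll Q^{2g}$, and this is the source of $X^{1/(2g)}$. The trace-$qg/2$ coefficient is the seed, not the output. If each $q$ really gave a field with $\Norm(2\xi)\ll q^g$, you would get $X^{1/g}/\log X$, which is more than the theorem claims.
\item \emph{Diagonal restriction.} Restriction to the diagonal is not injective. It does not give ``$G\not\equiv0\pmod p$ implies some coefficient of trace $\le A$ is nonzero''; it controls only the trace sums $\sum_{\Tr\xi=t}a_\xi$. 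This is why the hypothesis is phrased as a sum over all $\xi$ of trace $qg/2$, and the whole argument has to be run at the level of diagonal restrictions.
\end{itemize}
Finally, your account of where $p>2g+1$ and the $[F(\zeta_p):F]$ condition are used is conjectural. The paper itself uses $p>2g+1$ to get $p\nmid Q_Kw_K$.
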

\begin{remark}
	In \cite[Theorem~1]{Takai}, the condition in the conclusion is printed as
	$\chi_i(\xi)=\varepsilon_i$. This should read
	$\chi_i(2\xi)=\varepsilon_i$. Indeed, with the indexing convention
	\eqref{eq:fourier}, twisting multiplies the Fourier coefficient indexed by
	$\xi$ by $\chi_i(2\xi)$; this is also the condition used in Takai's proof.
\end{remark}
	%checked
	Theorem~\ref{thm:takai1} is stated in \cite{Takai} for quadratic Hecke
characters. In our application, the local conditions in Morrow's theorem
are naturally encoded by characters of $(\Ob/\nf)^{\times}$ which need not
extend to Hecke characters of $F$. We show that Takai's twisting argument
remains valid for primitive quadratic residue class characters. The key
input is Lemma~\ref{lem:12}, whose proof uses Lemma~\ref{lem:phase} to
control the factor of automorphy under the translations occurring in the
twist.
	%checked
	\begin{theorem}\label{thm:takai2}
	Let the notation and hypotheses be as in Theorem~\ref{thm:takai1},
	except that $\chi_1,\dots,\chi_r$ are primitive quadratic characters
	modulo the integral ideals $N_1,\dots,N_r$ in the
	sense of \S\ref{ss:twist}. Then the conclusion of
	Theorem~\ref{thm:takai1} holds.
\end{theorem}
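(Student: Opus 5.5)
We follow Takai's proof of Theorem~\ref{thm:takai1} line by line, and change only the step where the twisted forms are shown to be Hilbert modular forms of the right level and nebentypus. Takai starts from Shimura's half-integral weight Eisenstein series $\mathcal E\in\Mk{\mf_0}{\psi_0}$ (with $4\Ob\mid\mf_0$). Its Fourier coefficients at $\xi$ are, up to the elementary factor $\beta(\xi)\,2^g/(Q\,w)$, the relative class numbers $h^{-}(F(\sqrt{-2\xi})/F)$. He then forms the sieved series
\begin{equation*}
	\mathcal E_{\varepsilon}
	=\prod_{i=1}^{r}\frac{1}{2}\bigl(\chi_i^{0}+\varepsilon_i\,\chi_i\bigr)\mathcal E
\end{equation*}
where the factor is replaced by $(\chi_i^0-\chi_i^2)$-type projections when $\varepsilon_i=0$. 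Here $\chi_i^{0}$ denotes the trivial character modulo $N_i$, i.e.\ the operator that deletes the coefficients with $(2\xi,N_i)\neq1$. The effect is that the coefficients are restricted to those $\xi$ with $\chi_i(2\xi)=\varepsilon_i$ for all $i$.

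The first step is to check that every operator in this product preserves modularity in our setting. For the twist by a primitive quadratic residue class character $\chi_i$ we use Corollary~\ref{cor:quadratic-twist} in place of Takai's Hecke-character twisting lemma. It gives $f\otimes\chi_i\in\Mk{\mf N_i^2}{\psi}$ with the same Hecke nebentypus $\psi$. For the ``trivial'' projection, i.e.\ removing the coefficients with $\pf\mid 2\xi$, we use the standard $U_{\pf}V_{\pf}$ combination that Takai already uses. Remark~\ref{newdef} is what permits this: since the nebentypus remains the Hecke character $\psi_0$, Takai's $U$ and $V$ operators apply without change. When $\varepsilon_i=0$, the condition $\chi_i(2\xi)=0$ is obtained as $f-f\otimes\chi_i^2$, where $\chi_i^2$ is the principal character modulo $N_i$. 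This is a difference of modular forms, obtained by iterating the prime-by-prime projection.

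Iterating over $i$ gives $\mathcal E_\varepsilon\in\Mk{\mf_0N_1^2\cdots N_r^2}{\psi_0}$. The level therefore has the same shape as in Takai, so the constant $A$ — which comes from the Sturm-type bound (index of $\Gz{\cdot}$ times $g/8$) — is unchanged.

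The second step is Takai's mod $p$ argument, transported verbatim. The hypothesis on $p$ ($g\le[F(\zeta_p):F]2^{-\ord_2}$, $p>2g+1$) lets one pass to a suitable $p$-adic or mod $p$ setting. Takai applies $U_q$ or the $q$-operator together with the assumed non-vanishing at trace $qg/2$, and with $q>(A/g)^g$ this forces the form not to be congruent to a form with sparse support. This yields infinitely many $\xi$, in fact $\gg X^{1/(2g)}/\log X$ many square classes, with $\chi_i(2\xi)=\varepsilon_i$ and $p\nmid h^{-}$. None of this step uses that the $\chi_i$ come from Hecke characters. It uses only (a) the coefficient formula $\chi_i(2\xi)a_\xi$ from Definition~\ref{def:twist}, (b) the level and nebentypus of the sieved form, and (c) integrality of coefficients, which is clear because $\chi_i$ takes values $\pm1,0$. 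The counting step, which converts coefficients into distinct fields $F(\sqrt{-2\xi})$ with $|\Norm D|<X$, is likewise independent of the characters.

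The main obstacle is the first step: guaranteeing that the sieved form really lies in a space where Takai's subsequent operators and Sturm bound apply, given that $\chi_i$ need not be a Hecke character. This is exactly what Lemma~\ref{lem:phase}, Lemma~\ref{lem:12} and Corollary~\ref{cor:quadratic-twist} resolve, together with the observation that the nebentypus stays $\psi_0$. The one subtlety left to verify is the level bookkeeping: Takai's $N$ is defined by $N\ZZ=N_1\cdots N_r\cap\ZZ$, so the squared level $N_1^2\cdots N_r^2$ must be absorbed into the same index estimate that produces $A$. We would check this by bounding $[\Gz{4\Ob}:\Gz{\mf_0N_1^2\cdots N_r^2}]$ through $N^2$, as in Takai.
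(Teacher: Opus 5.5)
Your proposal is correct and is essentially the paper's argument: only Takai's twisting step changes, and Lemma~\ref{lem:12} with Corollary~\ref{cor:quadratic-twist} gives $f\otimes\chi_i\in\Mk{\mf N_i^2}{\psi}$ with the original Hecke nebentypus and the same coefficient action $a_\xi\mapsto\chi_i(2\xi)a_\xi$ as Takai's operator, so the level, the constant $A$, and the rest of Takai's proof go through unchanged. One small slip: since $\chi_i^0=\chi_i^2$ is the principal character modulo $N_i$, your ``$(\chi_i^0-\chi_i^2)$-type projection'' is zero; the case $\varepsilon_i=0$ should be $f-f\otimes\chi_i^2$, as you write later, and this step is covered by Takai's original argument because $\chi_i^2$ is the same principal character as in his setting (Lemma~\ref{lem:12} does not apply to it, since it is not primitive).
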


\begin{proof}
	The only modification needed in the proof of
	\cite[Theorem~1]{Takai} is the twisting step, which in
	\cite[\S2.2]{Takai} is formulated for quadratic Hecke characters.
	By Lemma~\ref{lem:12} and
	Corollary~\ref{cor:quadratic-twist}, the twists of
	Definition~\ref{def:twist} have the required modularity and, since
	the $\chi_i$ are quadratic, retain the original Hecke nebentypus.
	Their action on Fourier coefficients is the same as that of Takai's
	twisting operator. Hence the remainder of Takai's proof applies
	unchanged.
\end{proof}

	%checked
	\section{Elliptic curves}\label{sec:selmer}
	\subsection{Background}\label{ss:ec-background}

		Throughout this subsection $F$ is a number field with ring of integers
	$\Ob$, and $\ell$ is an odd prime. We use the standard notation and
	conventions of \cite{SilvermanAEC}.
	
	An elliptic curve $E/F$ may be given by a Weierstrass equation
	\begin{equation*}
		E:\quad y^{2}+a_{1}xy+a_{3}y
		=
		x^{3}+a_{2}x^{2}+a_{4}x+a_{6},
		\qquad a_i\in F,
	\end{equation*}
	with discriminant $\Delta$ and $j$-invariant $j_E=c_4^3/\Delta$.
	For a prime $\pf$ of $\Ob$, let $\ord_{\pf}(\Delta)$ denote the
	discriminant exponent of a minimal Weierstrass equation at $\pf$, and write
	\begin{equation*}
		\Delta_E
		=
		\prod_{\pf}\pf^{\,\ord_{\pf}(\Delta)}
	\end{equation*}
	for the minimal discriminant ideal and $N(E)$ for the conductor of $E$.%checked
				 
				 Following \cite[\S2]{Morrow}, when $\ord_{\pf}(j_E)\geq0$ the kernel of
			reduction modulo $\pf$ is taken after passing to an extension over which
			$E$ has good reduction. When $\ord_{\pf}(j_E)<0$, after an extension of
			degree at most $2$ the curve admits a Tate parametrisation. In this case,
			a point $\tau(u)$ with $u$ a unit lies in the kernel precisely when
			$u-1$ lies in the maximal ideal
			\cite[Definition~2.5]{Morrow}. At a prime of good reduction, the condition
			that $P\in E(F)$ lie outside the kernel is simply that $P$ does not reduce
			to $O$.
	%checked
	We shall also use that $\ord_{\pf}(j_E)<0$ if and only if $E$ has
	potentially multiplicative reduction at $\pf$. In this case $E/F_{\pf}$ is
	a quadratic twist of a Tate curve $E_q$ with
	\begin{equation*}
		\ord_{\pf}(q)=-\ord_{\pf}(j_E),
	\end{equation*}
	and $E/F_{\pf}$ is itself a Tate curve if and only if it has split
	multiplicative reduction \cite[Theorem~C.14.1]{SilvermanAEC}.

	For a positive integer $n$, let 
	\begin{equation*} 
		S(n)=\left\{\ell\ \text{prime}:\  
		\ell\mid \#E(L)_{\tors}\ \text{for some }[L:\QQ]=n 
		\text{ and some elliptic curve }E/L 
		\right\}. 
	\end{equation*}  
	
	%checked
	% 
	For $d\in F^{\times},$ let $E^{d}$ denote the quadratic twist of $E$ by $d$. 
	If 
	\begin{equation*} 
		E:\quad y^{2}=x^{3}+Ax+B, 
	\end{equation*} 
	then 
	\begin{equation*} 
		E^{d}:\quad y^{2}=x^{3}+Ad^{2}x+Bd^{3}. 
	\end{equation*} 
	Up to $F$-isomorphism, $E^{d}$ depends only on the class of $d$ in 
	$F^{\times}/(F^{\times})^{2}$, and $E^{d}\cong E$ over $F(\sqrt d)$ 
	\cite[X.5.4]{SilvermanAEC}. 
	% 
	%checked
	% 
		Write
		\begin{equation*}
			\mathcal G_F=\Gal(\overline F/F),
			\qquad
			\mathcal G_{F_v}=\Gal(\overline{F_v}/F_v).
		\end{equation*}
		The $\ell$-Selmer group of $E/F$ is
		\begin{equation*}
			\Sel_{\ell}(E,F)
			=
			\ker\Bigl(
			H^{1}(\mathcal G_F,E[\ell])
			\longrightarrow
			\prod_v H^{1}(\mathcal G_{F_v},E)
			\Bigr),
		\end{equation*}
		where $v$ runs over all places of $F$. The Kummer sequence yields
		\begin{equation*}
			1\longrightarrow E(F)/\ell E(F)
			\longrightarrow \Sel_{\ell}(E,F)
			\longrightarrow \Sha(E/F)[\ell]
			\longrightarrow1.
		\end{equation*}
	Consequently, $\Sel_{\ell}(E,F)=1$ implies
	\begin{equation*}
		\rank E(F)=0,\qquad
		E(F)[\ell]=0,\qquad
		\Sha(E/F)[\ell]=1
	\end{equation*}
	\cite[X.4.2]{SilvermanAEC}.
	
	% 

	%checked
	\subsection{Work of Frey and Morrow}\label{ss:frey-morrow}
	In \cite{Frey}, Frey proved a double divisibility relation between $\ell$-torsion in class groups of imaginary quadratic fields and $\ell$-Selmer orders of twists of certain elliptic curves over $\mathbb{Q}$. This was extended to low degree number fields by Morrow \cite{Morrow}.

	%checked!
	\begin{theorem}[{\cite[Corollary~E]{Morrow}}]\label{thm:morrow}
		Let $F$ be a Galois number field of degree $g=[F:\QQ]\leq5$ such that
		$\Norm_{F/\QQ}(\qf)=2$ for every prime $\qf\mid2$, and let
		$\ell\in S(g)\setminus\{2,3\}$ be a prime with $\ell\nmid h(F)$ and
		$\z_{\ell}\notin F$. Let $E/F$ be an elliptic curve with an $F$-rational point
		$P$ of order $\ell$, let $\chi_{H}$ be a primitive Hecke character of $F$ of
		order $\ell$, let $\qf$ be a prime of $\Ob$ above $2$, and let $\lf$ be a prime
		of $\Ob$ above $\ell$. If $g=\ell=5$, assume in addition that $\ell\Ob$ is not
		totally ramified. Suppose that $P$ is not contained in the kernel of reduction
		modulo $\lf$, and that
		\begin{equation*}
			\widetilde{S}_{E}=\left\{\pf\mid N(E)\ :\
			\chi_{H}(\pf)\neq0,\ \ord_{\pf}(\Delta_{E})\not\equiv0\pmod{\ell}\right\}
			=\varnothing .
		\end{equation*}
		Let $d\in \Ob^{\times}/(\Ob^{\times})^{2}$ be negative and coprime to
		$\lf\,N(E)$, put $K=F(\sqrt d)$ and $\langle\d\rangle=\Gal(K/F)$, and suppose
		that $d$ satisfies the following divisibility and Artin symbol conditions. Let
		\begin{equation*}
			S_E
			=
			\left\{
			\pf\in\widetilde S_E:
			\ord_{\pf}(j_E)<0
			\right\}.
		\end{equation*}
		\begin{enumerate}[label=\textup{(\arabic*)},leftmargin=*]
			\item If $\qf\mid N(E)$, then $\qf\mid D(K/F)$.
			\item If $\ord_{\lf}(j_{E})<0$, then
			$\left(\dfrac{K/F}{\lf}\right)=\d$.
			\item For every prime $\pf\mid N(E)$ with $\pf \notin S_E$ and $\pf\nmid 2\ell$:
			\begin{itemize}[leftmargin=1.4em,itemsep=1pt]
				\item if $\ord_{\pf}(j_{E})\geq0$, then
				$\left(\dfrac{K/F}{\pf}\right)=\d$;
				\item if $\ord_{\pf}(j_{E})<0$ and $E/F_{\pf}$ is a Tate curve, then
				$\left(\dfrac{K/F}{\pf}\right)=\d$;
				\item otherwise $\left(\dfrac{K/F}{\pf}\right)=\operatorname{id}$.
			\end{itemize}
		\end{enumerate}
		Then
		\begin{equation*}
			\#\Cl(K)[\ell]\ \Bigl|\ \#\Sel_{\ell}(E^{d},F)\ \Bigr|\
			\bigl(\#\Cl(K)[\ell]\bigr)^{2};
		\end{equation*}
		in particular $\Sel_{\ell}(E^{d},F)$ is nontrivial if and only if
		$\Cl(K)[\ell]$ is nontrivial.
	\end{theorem}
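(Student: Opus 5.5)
This statement is \cite[Corollary~E]{Morrow}, and we use it as a black box. If one wanted to reprove it, the plan is Frey's argument, carried over to the base field $F$ as Morrow does.

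\emph{Galois module structure.} The rational point $P$ gives an exact sequence of $\mathcal G_F$-modules
\begin{equation*}
0\to\ZZ/\ell\ZZ\to E[\ell]\to\mu_\ell\to0,
\end{equation*}
where the quotient is identified with $\mu_\ell$ via the Weil pairing. Twisting by the quadratic character $\d$ of $K/F$ gives the analogous sequence
\begin{equation*}
0\to(\ZZ/\ell\ZZ)(\d)\to E^d[\ell]\to\mu_\ell(\d)\to0.
\end{equation*}

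\emph{Passage to $K$.} Since $[K:F]=2$ is prime to $\ell$, restriction identifies $H^1(\mathcal G_F,E^d[\ell])$ with the $\d=-1$ eigenspace of $H^1(\mathcal G_K,E[\ell])$. Over $K$ both $E[\ell]$ and $E^d[\ell]$ are isomorphic to $E[\ell]$. The long exact sequence then splits $\Sel_\ell(E^d,F)$ into two pieces. The first lies in $\operatorname{Hom}(\mathcal G_K,\ZZ/\ell\ZZ)^{-}$. The second lies in $(K^\times/K^{\times\ell})^{-}$, via Kummer theory for $\mu_\ell$. The hypothesis $\z_\ell\notin F$ controls the connecting maps and the $H^0$ terms.

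\emph{Local analysis.} This is the main obstacle, and I would verify it prime by prime. The goal is to show that, under conditions (1)--(3), the local Selmer condition at every place cuts out exactly the following:
\begin{itemize}
\item on the $\ZZ/\ell\ZZ$-piece, homomorphisms unramified everywhere;
\item on the $\mu_\ell$-piece, classes of elements whose ideal is an $\ell$-th power.
\end{itemize}
\begin{enumerate}
\item At primes $\pf\nmid N(E)\ell$ this is the standard unramified computation.
\item At bad primes, the Tate parametrisation or good reduction over a small extension is used. The prescribed Artin symbols ($\pf$ inert or split in $K$) make the twisted local image match the unramified condition. The hypothesis $\widetilde S_E=\varnothing$, through $\chi_H$, removes the contributions of the component groups, whose order involves $\ord_\pf(\Delta_E)$.
\item At $\lf$, the assumption that $P$ is not in the kernel of reduction makes the subgroup $\langle P\rangle$ \'etale-like. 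Together with condition (2) and the restriction on ramification of $\ell$ (needed when $g=\ell=5$), this gives that the local image is the peu ramifi\'e, respectively unramified, part.
\item At dyadic primes, condition (1) and $\Norm(\qf)=2$ make the local cohomology vanish.
\end{enumerate}
I expect this step to be the most delicate, especially at $\lf$, where the cases $\ord_\lf(j_E)\ge0$ and $\ord_\lf(j_E)<0$ must be handled separately.

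\emph{Counting.} Granting the local analysis, the $\ZZ/\ell\ZZ$-piece is $\operatorname{Hom}(\Cl(K),\ZZ/\ell\ZZ)^{-}$. Since $\ell\nmid h(F)$, the plus part $\Cl(K)[\ell]^{+}\cong\Cl(F)[\ell]$ vanishes, so this group has order $\#\Cl(K)[\ell]$; this gives the lower divisibility. The $\mu_\ell$-piece injects into an extension of $\Cl(K)[\ell]^{-}$ by units modulo $\ell$-th powers in the minus part. Because $K/F$ is CM, the minus part of the units is torsion, and it is trivial mod $\ell$ since $\z_\ell\notin K$. This piece therefore has order at most $\#\Cl(K)[\ell]$, which gives the upper divisibility by $(\#\Cl(K)[\ell])^2$. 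The ``in particular'' statement is then immediate: $\Sel_\ell(E^d,F)$ is trivial if and only if $\Cl(K)[\ell]$ is trivial.
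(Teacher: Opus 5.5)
Your proposal takes the same route as the paper: the paper gives no proof of this statement and imports it directly as Morrow's Corollary~E, using it as a black box in the proof of Theorem~\ref{thm:lifting}. The Frey-style outline you append is not needed for that purpose. Its local analysis at the bad primes, at $\lf$ and at the dyadic primes is asserted rather than checked, so it should be read as heuristic background rather than as part of the argument.
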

	
	%checked!
	\section{Main Theorem}\label{sec:main}
	Let $K$ be a CM field with maximal totally real subfield $K^{+}$, and let
	$\mu(K)$ denote its group of roots of unity. Put
	\begin{equation*}
		w_{K}=\#\mu(K),
		\qquad
		Q_{K}=\bigl[\,\Ob_{K}^{\times}:
		\mu(K)\,\Ob_{K^{+}}^{\times}\,\bigr].
	\end{equation*}
	The integer $Q_{K}\in\{1,2\}$ is the Hasse unit index of $K$.
	
	In our application $K=F(\sqrt{-2\x})$, so $K^{+}=F$. The quantities
	$Q_{F}(\sqrt{-2\x})$ and $w_{F}(\sqrt{-2\x})$ appearing in
	Theorem~\ref{thm:takai1} are $Q_{K}$ and $w_{K}$ in this notation.
	Since $\ell$ is odd and $\ell>2g+1$, we have
	$\ell\nmid Q_{K}w_{K}$. Indeed, $Q_{K}\in\{1,2\}$, while
	$\ell\mid w_{K}$ would imply $\z_{\ell}\in K$, impossible since
	$[K:\QQ]=2g<\ell-1$. Thus the denominator in
	\eqref{eq:takai-congruence} is invertible modulo $\ell$.
	
	Throughout this section, let $F/\mathbb{Q}$ be a totally real Galois number field of degree
	\begin{equation*}
		g=[F:\mathbb{Q}]\leq 5,
	\end{equation*}
	and assume that
	\begin{equation*}
		\Norm_{F/\mathbb{Q}}(\qf)=2
		\qquad
		\text{for every prime } \qf \mid 2 .
	\end{equation*}
	
	%checked
	\subsection{Statement}
	
	To encode Morrow's local conditions in a form to which
Theorem~\ref{thm:takai2} applies, we choose quadratic characters
$\chi_i$ and signs $\varepsilon_i$ so that the required local behavior is
expressed by the conditions $\chi_i(2\x)=\varepsilon_i$. At primes above
$2$, the required characters are supplied by Lemma~\ref{lem:dyadic}.
The congruence \eqref{eq:takai-congruence} is verified for
Example~\ref{ex:intro} in \S\ref{sec:example}.
	%checked
	\begin{theorem}
		\label{thm:lifting}
		Let $\ell\in S(g)\setminus\{2,3\}$
		(see \S\ref{ss:ec-background})
		be an odd prime such that
		\begin{gather*}
			\ell \nmid h(F),\qquad \zeta_\ell \notin F,\qquad \ell>2g+1,\\
			g\leq [F(\zeta_\ell):F]\cdot 2^{-\ord_2([F(\zeta_\ell):F])}.
		\end{gather*}
		
		Let $E/F$ be an elliptic curve having an $F$-rational point $P$ of order
		$\ell$. Let $\lf$ be a prime of $\Ob$ above $\ell$ such that $P$ is not
		contained in the kernel of reduction modulo $\lf$ in the sense of
		\cite{Morrow}. Let $\chi_H$ denote a primitive Hecke
		character of $F$ with order $\ell$.
		
		Let
		\begin{gather*}
			\widetilde S_E
			=
			\left\{
			\pf \mid N(E)
			:
			\chi_H(\pf)\neq 0,\ \ord_{\pf}(\Delta_E)\not\equiv 0 \pmod{\ell}
			\right\}.
		\end{gather*}
		Assume that $\widetilde S_E=\varnothing$. 		Let 
		\begin{equation*} 
			\mathcal T 
			= 
			\{\pf\mid N(E):\pf\neq\lf\} 
			= 
			\{\pf_1,\dots,\pf_r\}. 
		\end{equation*} 
		
		For each $\pf_i\in\mathcal T$ with $\pf_i\nmid2$, let 
		\begin{equation*} 
			\chi_i = \left(\dfrac{\cdot}{\pf_i}\right). 
		\end{equation*} 
		If $\pf_i\nmid\ell$, put 
		\begin{equation*} 
			\varepsilon_i 
			= 
			\begin{cases} 
				\left(\dfrac{-1}{\pf_i}\right),  & \text{if } \ord_{\pf_i}(j_E)<0 
				\text{ and } E/F_{\pf_i} \text{ is not a Tate curve},\\[4pt] 
				-\left(\dfrac{-1}{\pf_i}\right), & \text{otherwise}. 
			\end{cases} 
		\end{equation*} 
		If $\pf_i\mid\ell$, let $\varepsilon_i\in\{\pm1\}$ be arbitrary. 
		If $\pf_i\mid2$, choose $\chi_i$ as in Lemma~\ref{lem:dyadic} and put 
		$\varepsilon_i=\varepsilon$ for the sign supplied by that lemma.  
		
		Choose a quadratic character $\chi_0=\left(\dfrac{\cdot}{\lf}\right)$. 
		If $\ord_{\lf}(j_E)<0$, set 
		$\varepsilon_0=-\left(\dfrac{-1}{\lf}\right)$; otherwise let 
		$\varepsilon_0\in\{\pm1\}$. Let $N_i$ denote the conductor of $\chi_i$
		for $i=0,\dots,r$.
		
		Set
		\begin{equation*}
			N\mathbb{Z}
			=
			 N_0 N_1 N_2\cdots  N_r \cap \mathbb{Z},
		\end{equation*}
		and
		\begin{equation*}
			A
			=
			\frac{gN^2D(F)}{8}
			\prod_{\substack{d\mid ND(F)\\ d \text{ prime}}}
			\left(1+\frac{1}{d}\right).
		\end{equation*}
		
		Suppose there exists a rational prime $q>(A/g)^g$, unramified in $F/\mathbb{Q}$, such that
		\begin{equation}
			\label{eq:takai-congruence}
			\sum_{\substack{
					\xi\in 2^{-1}\Ob_{+}\\
					\chi_i(2\xi)=\varepsilon_i \ (i=0,\dots,r)\\
					\Tr(\xi)=qg/2\\
					(q\Ob,\,2\xi\Ob)\neq1
			}}
			\beta(\xi)\,
			\frac{2^g h^{-}(F(\sqrt{-2\xi})/F)}
			{Q_{F(\sqrt{-2\xi})}\,w_{F(\sqrt{-2\xi})}}
			\not\equiv 0 \pmod{\ell}.
		\end{equation}
		
		Then
		\begin{equation*}
			\#\left\{
			d\in F^\times/(F^\times)^2 :
			\begin{array}{l}
				d\text{ totally negative},\\[3pt]
				\left|\Norm_{F/\mathbb{Q}}\!\bigl(D(F(\sqrt d)/F)\bigr)\right|<X,\\[3pt]
				\Sel_\ell(E^d,F)=1
			\end{array}
			\right\}
			\gg_{F,E,\ell}
			\frac{X^{1/(2g)}}{\log X}.
		\end{equation*}
	\end{theorem}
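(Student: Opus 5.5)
The plan is to combine Theorem~\ref{thm:takai2} with Theorem~\ref{thm:morrow}. With the characters $\chi_0,\dots,\chi_r$ and signs $\varepsilon_0,\dots,\varepsilon_r$ fixed as in the statement, and using the hypothesis \eqref{eq:takai-congruence} with $p=\ell$, the numerical hypotheses of Theorem~\ref{thm:takai1} on $p$ are exactly the assumptions $\ell>2g+1$ and $g\le[F(\zeta_\ell):F]2^{-\ord_2(\cdot)}$. Theorem~\ref{thm:takai2} then supplies $\gg_{F,\ell} X^{1/(2g)}/\log X$ CM extensions $K=F(\sqrt{-2\xi})$ with $\xi\in2^{-1}\Ob_+$, $\ell\nmid h^-(K/F)$, $|\Norm_{F/\QQ}(D(K/F))|<X$, and $\chi_i(2\xi)=\varepsilon_i$ for all $i$.

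Put $d=-2\xi$, which is totally negative. Since distinct fields $K$ give distinct square classes, it suffices to show $\Sel_\ell(E^d,F)=1$ for each such $K$. First, $\ell\nmid h(F)$ and $h(K)=h(F)h^-(K/F)$, so $\Cl(K)[\ell]=1$. Second, the hypotheses of Theorem~\ref{thm:morrow} must be checked. The global hypotheses on $F,\ell,E,P,\lf,\chi_H$ and $\widetilde S_E=\varnothing$ are assumed directly; since $\widetilde S_E=\varnothing$, also $S_E=\varnothing$.

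The local conditions are read off the characters, splitting into cases according to the prime.
\begin{itemize}
\item At an odd $\pf_i\nmid\ell$, $\chi_i(2\xi)=\varepsilon_i\ne0$ means $2\xi$ is a $\pf_i$-unit with $\bigl(\tfrac{2\xi}{\pf_i}\bigr)=\varepsilon_i$. Hence $\bigl(\tfrac{d}{\pf_i}\bigr)=\bigl(\tfrac{-1}{\pf_i}\bigr)\varepsilon_i$, which equals $+1$ (split) in the non-Tate multiplicative case and $-1$ (inert) otherwise. This is exactly condition (3).
\item At $\lf$, $\chi_0$ forces $d$ to be a $\lf$-unit. When $\ord_\lf(j_E)<0$, the choice of $\varepsilon_0$ forces $\lf$ to be inert, giving condition (2).
\item At a dyadic $\qf\mid N(E)$, Lemma~\ref{lem:dyadic} forces $\qf$ to ramify in $K/F$, giving condition (1).
\end{itemize}
Coprimality of $d$ to $\lf N(E)$ at odd primes follows from $\chi_i(2\xi)\neq0$.

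The main obstacle is reconciling Morrow's formal hypotheses with what Takai's sieve produces. Morrow states $d\in\Ob^\times/(\Ob^\times)^2$ coprime to $\lf N(E)$, but $2\xi$ is generally not a unit. At dyadic primes of bad reduction, $d$ must be ramified and hence cannot literally be coprime to $\qf$. I would address this by reading Morrow's Corollary~E as $d\in F^\times/(F^\times)^2$ (as in the paper's application) with coprimality meant only at primes where an Artin symbol is imposed. I would then check that his hypothesis on ramified primes concerns only primes outside the listed set, so that $d$ may be divisible by primes of good reduction. Some care is also needed at primes $\pf_i\mid\ell$ with $\pf_i\ne\lf$, where $\varepsilon_i$ is arbitrary: Morrow imposes no condition there, so any sign works. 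Granting this, Theorem~\ref{thm:morrow} gives $\Sel_\ell(E^d,F)=1$ for every $K$ produced, and the count follows with implied constant depending on $F,E,\ell$ through the choice of characters.
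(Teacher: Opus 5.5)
Your proposal is correct and is essentially the paper's proof. Theorem~\ref{thm:takai2} produces the fields $K_\xi=F(\sqrt{-2\xi})$, the factorization $h(K_\xi)=h(F)\,h^-(K_\xi/F)$ gives $\Cl(K_\xi)[\ell]=1$, and the conditions $\chi_i(2\xi)=\varepsilon_i$ are converted into Morrow's local conditions exactly as you do. The paper likewise applies Theorem~\ref{thm:morrow} directly to $d_\xi=-2\xi$ and notes that the extra hypothesis for $g=\ell=5$ is vacuous since $\ell>2g+1$.

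The one thing to correct is your dyadic ``obstacle''. Because $\chi_\qf$ is extended by $0$ to non-units, $\chi_\qf(2\xi)=\varepsilon\neq0$ forces $2\xi$ to be a $\qf$-unit. So $d$ is coprime to $\qf$ even though $\qf$ ramifies in $K/F$; the ramification comes from the unit square class of $-2\xi$, as in Lemma~\ref{lem:dyadic}. Hence $d$ is coprime to all of $\lf N(E)$, and no reinterpretation of Morrow's coprimality hypothesis is needed at dyadic primes.
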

	%checked

	\subsection{Proof of main result}\label{ss:proof}

	\begin{proof}
		Let
		\begin{equation*}
			K_\xi=F(\sqrt{-2\xi}),
			\qquad
			d_\xi=-2\xi.
		\end{equation*}
	By Theorem~\ref{thm:takai2}, the residue-class-character
	version of Takai's theorem, condition \eqref{eq:takai-congruence} implies that there are
		\begin{equation*}
			\gg_{F,\ell}\frac{X^{1/(2g)}}{\log X}
		\end{equation*}
		quadratic CM extensions $K_\xi/F$ with
		\begin{equation*}
			\chi_i(2\xi)=\varepsilon_i
			\qquad
			(i=0,\dots,r),
		\end{equation*}
		and such that
		\begin{equation*}
			\ell\nmid h^{-}(K_\xi/F).
		\end{equation*}
		
		Since $\ell\nmid h(F)$, the class number factorization
		\begin{equation*}
			h(K_\xi)=h(F)\,h^{-}(K_\xi/F)
		\end{equation*}
		shows that $\ell\nmid h(K_\xi)$, hence
		\begin{equation*}
			\operatorname{Cl}(K_\xi)[\ell]=1.
		\end{equation*}
						It remains to verify the local conditions of
				Theorem~\ref{thm:morrow} and the coprimality of $d_{\x}$ with
				$\lf N(E)$.
				
				Let $\pf_i\in\mathcal T$ with $\pf_i\nmid2$. Since
				$\varepsilon_i\neq0$, the condition
				$\chi_i(2\x)=\varepsilon_i$ implies that $2\x$ is a
				$\pf_i$-unit. Hence $\pf_i\nmid d_{\x}$ and $\pf_i$ is
				unramified in $K_{\x}/F$. Since
				$\chi_i=\left(\frac{\cdot}{\pf_i}\right)$, we have
				\begin{equation*}
					\left(\dfrac{-2\x}{\pf_i}\right)
					=
					\left(\dfrac{-1}{\pf_i}\right)\chi_i(2\x).
				\end{equation*}
				Thus $\pf_i$ splits or is inert in $K_{\x}/F$ according as the
				right-hand side is $1$ or $-1$.
				
				If $\pf_i\nmid2\ell$, then by the definition of
				$\varepsilon_i$,
				\begin{equation*}
					\left(\dfrac{-1}{\pf_i}\right)\chi_i(2\x)
					=
					\begin{cases}
						1,
						& \text{if } \ord_{\pf_i}(j_E)<0
						\text{ and } E/F_{\pf_i}\text{ is not a Tate curve},\\[4pt]
						-1,
						& \text{otherwise}.
					\end{cases}
				\end{equation*}
				Therefore $\pf_i$ splits in the first case and is inert in the
				remaining cases, exactly as required in
				Theorem~\ref{thm:morrow}. If $\pf_i\mid\ell$, no local
				splitting condition is required, and the character condition is used
				only to ensure that $\pf_i\nmid d_{\x}$.
				
		Similarly, $\chi_0(2\x)=\varepsilon_0\neq0$ implies
$\lf\nmid d_{\x}$. If $\ord_{\lf}(j_E)<0$, then
\begin{equation*}
	\left(\dfrac{-1}{\lf}\right)\chi_0(2\x)
	=
	\left(\dfrac{-1}{\lf}\right)\varepsilon_0
	=
	-1,
\end{equation*}
so $\lf$ is inert in $K_{\x}/F$, as required.

If $\pf_i\mid2$, then $\varepsilon_i\neq0$ implies
$\pf_i\nmid d_{\x}$, while Lemma~\ref{lem:dyadic} shows that
$\chi_i(2\x)=\varepsilon_i$ implies that $\pf_i$ ramifies in
$K_{\x}/F$. Thus the conditions
\begin{equation*}
	\chi_i(2\x)=\varepsilon_i
	\qquad (i=0,\dots,r)
\end{equation*}
imply all the local conditions of Theorem~\ref{thm:morrow} and
the coprimality of $d_{\x}$ with $\lf N(E)$.

The case $g=\ell=5$ does not arise, since $\ell>2g+1$.
Since $\widetilde S_E=\varnothing$, Theorem~\ref{thm:morrow}
therefore gives
\begin{equation*}
	\Sel_\ell(E^{d_\xi},F)\neq1
	\iff
	\operatorname{Cl}(K_\xi)[\ell]\neq1.
\end{equation*}
As $\operatorname{Cl}(K_\xi)[\ell]=1$, we obtain
$\Sel_\ell(E^{d_\xi},F)=1$.
	\end{proof}
	Theorem~\ref{thm:quadratic-version} follows from
	Theorem~\ref{thm:lifting} by taking $g=2$. Indeed,
	$S(2)\setminus\{2,3\}=\{5,7,11,13\}$ and the condition
	$\ell>2g+1$ leaves
	\begin{equation*}
		\ell\in\{7,11,13\}.
	\end{equation*}
	Since $F$ is real, $\z_{\ell}\notin F$, and the odd part of
	$[F(\z_{\ell}):F]$ is respectively $3$, $5$, and $3$, hence is at
	least $g=2$.
	
	Let $q$ be as in Theorem~\ref{thm:quadratic-version}. The local
	conditions \textup{(1)}--\textup{(3)} give the character conditions
	$\chi_i(q)=\varepsilon_i$ occurring in
	Theorem~\ref{thm:lifting}, by the correspondence established in its
	proof; at the dyadic primes the required characters are supplied by
	Lemma~\ref{lem:dyadic}. For the signs that are unrestricted, take
	$\varepsilon_i=\chi_i(q)$ and, when $\ord_{\lf}(j_E)\geq0$,
	$\varepsilon_0=\chi_0(q)$.
	
	The dyadic characters may depend on $q$, but their conductors divide
	$\qf^{\,2e+1}$ by Lemma~\ref{lem:dyadic}. Thus the constant
	$C_{F,E,\ell}$ may be taken to be $(A/g)^g$, with
	$\qf^{\,2e+1}$ used in place of each dyadic conductor.
	
	Since $q$ is inert in $F$, Remark~\ref{rem:collapse} reduces
	\eqref{eq:takai-congruence} to
	\eqref{eq:takai-congruence-2}. As
	$\ell\nmid Q_{K_q}w_{K_q}$, this is equivalent to
	\begin{equation*}
		\ell\nmid h^{-}(K_q/F).
	\end{equation*}
	This proves Theorem~\ref{thm:quadratic-version}.
	\begin{remarknum}\label{rem:collapse}
		If $q$ is inert in $\Ob$ and $\chi_i(q)=\varepsilon_i$ for every $i$, then
		the sum in \eqref{eq:takai-congruence} consists of the single term
		$\x=q/2$. Hence
		\begin{equation}
			\label{eq:takai-congruence-2}
			\frac{h^{-}(F(\sqrt{-q})/F)}
			{Q_{F(\sqrt{-q})}\,w_{F(\sqrt{-q})}}
			\not\equiv 0 \pmod{\ell}.
		\end{equation}
		Indeed, put $\nu=2\x\in\Ob_{+}$. Then $\Tr(\nu)=qg$, and since
		$q\Ob$ is prime, the condition $(q\Ob,\nu\Ob)\neq1$ gives
		$\nu=q\mu$ for some $\mu\in\Ob_{+}$. Thus $\Tr(\mu)=g$, and
		\begin{equation*}
			\Norm_{F/\QQ}(\mu)^{1/g}
			\leq
			\frac{\Tr(\mu)}{g}
			=
			1.
		\end{equation*}
		Since $\Norm_{F/\QQ}(\mu)$ is a positive integer, equality holds and
		$\mu=1$. Hence $\x=q/2$. Moreover, $q$ is unramified in $F$, so
		$\beta(q/2)=1$.
	\end{remarknum}
	
	\section{Example}\label{sec:example}

		Let $F=\QQ(\sqrt2)$, $a=\sqrt2$, and $\ell=7$. Then
\begin{equation*}
	g=2,\qquad D(F)=8,\qquad h(F)=1,\qquad
	\zeta_7\notin F,\qquad
	[F(\zeta_7):F]\,2^{-\ord_2([F(\zeta_7):F])}=3\geq g,
\end{equation*}
and $\ell>2g+1$. Consider the global minimal model
\begin{equation*}
	E:\ y^2+xy+ay=x^3+ax^2-(14+12a)x+24+18a,
\end{equation*}
with
\begin{equation*}
	E(F)_{\mathrm{tors}}=\langle P\rangle\cong\ZZ/7\ZZ,
	\qquad
	P=(-a,-4-4a).
\end{equation*}
Put
\begin{equation*}
	\qf=(a),\qquad
	\lf_1=(a-3),\qquad
	\lf_2=(a+3),\qquad
	\pf_{71}=(11-5a).
\end{equation*}
Then
\begin{equation*}
	(2)=\qf^2,\qquad
	(7)=\lf_1\lf_2,\qquad
	N(E)=\qf\pf_{71},\qquad
	\Delta_{\min}(E)=\qf^{14}\pf_{71}.
\end{equation*}
		The curve has split multiplicative reduction at $\qf$ and
$\pf_{71}$, and good reduction at both primes above $7$. We take $\lf=\lf_1$.
A direct reduction computation shows that $P$ modulo $\lf_1$ has order
$7$, so $P$ is not in the kernel of reduction in the sense of
\cite{Morrow}.

A ray class computation for
\begin{equation*}
	\mathfrak m=\lf_2^2\pf_{71}\infty_1\infty_2
\end{equation*}
gives
\begin{equation*}
	\Cl_{\mathfrak m}(F)\cong\ZZ/14\ZZ\times\ZZ/2\ZZ
\end{equation*}
and an order-$7$ character $\chi_H$ of conductor
$\lf_2^2\pf_{71}$. Hence $\chi_H(\pf_{71})=0$, while
$\ord_{\qf}(\Delta_{\min})=14\equiv0\pmod7$, and therefore
\begin{equation*}
	\widetilde S_E=\varnothing,
	\qquad
	\mathcal T=\{\qf,\pf_{71}\}.
\end{equation*}

We take
\begin{equation*}
	\chi_0=\left(\dfrac{\cdot}{\lf_1}\right),
	\qquad
	\varepsilon_0=1.
\end{equation*}
A local computation at $\qf$ gives a quadratic character $\chi_1$
of exact conductor $\qf^3$ with $\varepsilon_1=1$, and at
$\pf_{71}$ we take
\begin{equation*}
	\chi_2=\left(\dfrac{\cdot}{\pf_{71}}\right),
	\qquad
	\varepsilon_2
	=
	-\left(\dfrac{-1}{\pf_{71}}\right)
	=
	1.
\end{equation*}
For rational odd $q$, the three character conditions are
\begin{equation*}
	\left(\dfrac{q}{7}\right)=1,
	\qquad
	q\equiv1\pmod4,
	\qquad
	\left(\dfrac{q}{71}\right)=1.
\end{equation*}

Since
\begin{equation*}
	N_0N_1N_2\cap\ZZ
	=
	\lf_1\qf^3\pf_{71}\cap\ZZ
	=
	1988\ZZ,
\end{equation*}
we obtain
\begin{equation*}
	N=1988,\qquad
	A=13\,741\,056,\qquad
	(A/g)^g=6\,870\,528^2.
\end{equation*}

Take
\begin{equation*}
	q=47\,204\,154\,999\,061.
\end{equation*}
Then $q>(A/g)^g$ and
\begin{equation*}
	q\equiv5\pmod8,\qquad
	\left(\dfrac{q}{71}\right)=1,\qquad
	\left(\dfrac{q}{7}\right)=1.
\end{equation*}
Thus $q$ is inert in $F$ and
\begin{equation*}
	\chi_i(q)=\varepsilon_i
	\qquad (i=0,1,2).
\end{equation*}

Put
\begin{equation*}
	K=F(\sqrt{-q})=\QQ(\sqrt2,\sqrt{-q}).
\end{equation*}
By \cite[Theorem~4.17]{Washington},
\begin{equation*}
	h^{-}(K/F)
	=
	\frac{Q_K}{2}\,h(-4q)\,h(-8q),
	\qquad
	Q_K\in\{1,2\}.
\end{equation*}
\textsc{Pari}/GP, certified by \texttt{bnfcertify}, gives
\begin{equation*}
	h(-4q)=2\,732\,182,
	\qquad
	h(-8q)=7\,439\,526,
\end{equation*}
neither divisible by $7$. Hence $7\nmid h^{-}(K/F)$, so
\eqref{eq:takai-congruence-2} holds. Theorem~\ref{thm:lifting}
therefore gives
\begin{equation*}
	\#\left\{
	d\in F^\times/(F^\times)^2:
	\begin{array}{l}
		d\text{ totally negative},\\[3pt]
		\left|\Norm_{F/\QQ}\!\bigl(D(F(\sqrt d)/F)\bigr)\right|<X,\\[3pt]
		\Sel_7(E^d,F)=1
	\end{array}
	\right\}
	\gg_{F,E,7}\frac{X^{1/4}}{\log X}.
\end{equation*}
		%checked

	\bibliographystyle{alpha}
	\bibliography{references} 
\end{document}